\newtheorem{theorem}{Theorem}
\newtheorem{proposition}[theorem]{Proposition}
\newtheorem{remark}[theorem]{Remark}
\newenvironment{proof}[1][Proof]{\textbf{#1.} }{\ \rule{0.5em}{0.5em}}
\begin{document}

\title{A GCV based Arnoldi-Tikhonov regularization method}
\author{P. Novati, M.R. Russo \\
Department of Mathematics\\
University of Padua, Italy}
\maketitle

\begin{abstract}
For the solution of linear discrete ill-posed problems, in this paper we
consider the Arnoldi-Tikhonov method coupled with the Generalized Cross
Validation for the computation of the regularization parameter at each
iteration. We study the convergence behavior of the Arnoldi method and its
properties for the approximation of the (generalized) singular values, under
the hypothesis that Picard condition is satisfied. Numerical experiments on
classical test problems and on image restoration are presented.
\end{abstract}

\textbf{Key words}. Linear discrete ill-posed problem. Tikhonov
regularization. Arnoldi algorithm. Generalized Cross Validation.

\section{Introduction}

In this paper we consider discrete ill-posed problems,%
\begin{equation}
Ax=b,\quad A\in \mathbb{R}^{N\times N},\text{ }b\in \mathbb{R}^{N},
\label{AvSys}
\end{equation}%
in which the right-hand side $b$ is assumed to be affected by noise, caused
by measurement or discretization errors. These systems typically arise from
the discretization of linear ill-posed problem, such as Fredholm integral
equations of the first kind with compact kernel (see e.g. \cite[Chapter 1]{PCH} for a
background). A common property of these kind of problems, is that the
singular values of the kernel rapidly decay and cluster near zero. In this
situation, provided that the discretization which leads to (\ref{AvSys}) is
consistent with the continuous problem, this property is inherited by the
matrix $A$.

Because of the ill conditioning of $A$ and the presence of noise in $b$,
some sort of regularization is generally employed for solving this kind of
problems. In this framework, a popular and well established regularization
technique is the Tikhonov method, which consists in solving the minimization
problem
\begin{equation}
\min_{x\in \mathbb{R}^{N}}\left\{ \Vert Ax-b\Vert ^{2}+\lambda ^{2}\Vert
Lx\Vert ^{2}\right\} ,  \label{tik}
\end{equation}%
where $\lambda >0$ is the regularization parameter and $L\in \mathbb{R}%
^{P\times N}$ is the regularization matrix (see e.g. \cite{HankHans} and
\cite{PCH} for a background). We denote the solution of (\ref{tik}) by $%
x_{\lambda }$. For a discussion about the choice of $L$ we may quote here
the recent work \cite{Don} and the references therein. As well known, the
choice of the parameter $\lambda $ is crucial in this setting, since it
defines the amount of regularization one wants to impose. Many techniques
have been developed to determine a suitable value for the regularizing
parameter and we can refer to the recent papers \cite{RR,Lukas,Ham,Kind} for
the state of the art, comparison and discussions. We remark that in (\ref%
{tik}) and throughout the paper, the norm used is always the Euclidean norm.

Assuming that $b=\overline{b}+e$, where $\overline{b}$ represents the
unknown error-free right-hand side, in this paper we assume that no
information is available on the error $e$. In such a situation, the most
popular and established techniques for the definition of $\lambda $ in (\ref%
{tik}), as for instance the L-curve criterion and the Generalized Cross
Validation (GCV), typically requires the computation of the GSVD of the
matrix pair $(A,L)$. Of course this decomposition may represents a serious
computational drawback for large-scale problems, such as the image
deblurring. In order to overcome this problem, Krylov projection methods
such as the ones based on the Lanczos bidiagonalization \cite{bjor,Hank,KHE,KO}
and the Arnoldi algorithm \cite{Atfirst,LR} are generally used. Pure
iterative methods such as the GMRES or the LSQR, eventually implemented in a
hybrid fashion (\cite[\S\ 6.6]{PCH}) can also be considered in this
framework.

In this paper we analyze the Arnoldi method for the solution of (\ref{tik})
(the so called Arnoldi-Tikhonov method, introduced in \cite{Atfirst}),
coupled with the GCV as parameter choice rule. Similarly to what made in
\cite{Nagy} for the Lanczos bidiagonalization process, we show that the
resulting algorithm can be fruitfully used for large-scale regularization.
Being based on the orthogonal projection of the matrix $A$ onto the Krylov
subspaces $\mathcal{K}_{m}(A,b)=\mathrm{span}\{b,Ab,\dots ,A^{m-1}b\}$, we
shall observe that for discrete ill-posed problems, the Arnoldi algorithm is
particularly efficient for the approximation of the GCV curve, after a very
few number of iterations.

Indeed, under the hypothesis that Picard condition is satisfied \cite{HanP},
we provide some theoretical results about the convergence of the
Arnoldi-Tikhonov methods and its properties for the approximation of the
singular values of$\ A$. These properties allow us to consider approximation
of the GCV curve which can be obtained working in small dimension (similarly
to what made in \cite{Atfirst} \ where a "projected" L-curve criterion is
used). The GCV curve approximation leads to the definition of a sequence of
regularization parameters (one for each step of the algorithm), which are
fairly good approximation of the regularization parameter arising from the
exact SVD (or GSVD).

The paper is organized as follows. In Section 2 we present a brief outline
about the Arnoldi-Tikhonov method for the iterative solution of (\ref{tik}).
In Section 3 and 4 we provide some theoretical results concerning the
convergence of the Arnoldi algorithm and the SVD (GSVD) approximation. In
Section 5 we explain the use the AT method with the GCV criterion. Some
numerical experiments are presented in Section 6 and 7.

\section{The Arnoldi-Tikhonov method}

Denoting by $\mathcal{K}_{m}(A,b)=\mathrm{span}\{b,Ab,\dots ,A^{m-1}b\}$ the
Krylov subspaces generated by $A$ and the vector $b$, the Arnoldi algorithm
computes an orthonormal basis $\left\{ w_{1},...,w_{m}\right\} $ of $%
\mathcal{K}_{m}(A,b)$. Setting $W_{m}=[w_{1},...,w_{m}]\in \mathbb{R}%
^{N\times m}$, the algorithm can be written in matrix form as%
\begin{equation}
AW_{m}=W_{m}H_{m}+h_{m+1,m}w_{m+1}e_{m}^{T},  \label{dec}
\end{equation}%
where $H_{m}=(h_{i,j})\in \mathbb{R}^{m\times m}$ is an upper Hessenberg
matrix which represents the orthogonal projection of $A$ onto $\mathcal{K}%
_{m}(A,b)$, and $e_{m}=(0,...,0,1)^{T}\in \mathbb{R}^{m}$. Equivalently, the
relation (\ref{dec}) can be written as%
\begin{equation}
AW_{m}=W_{m+1}\overline{H}_{m},  \label{dec2}
\end{equation}%
where%
\begin{equation}
\overline{H}_{m}=\left[
\begin{array}{c}
H_{m} \\
h_{m+1,m}e_{m}^{T}%
\end{array}%
\right] \in \mathbb{R}^{(m+1)\times m}.  \label{hm}
\end{equation}%
In exact arithmetics the Arnoldi process terminates whenever $h_{m+1,m}=0$,
which means that $\mathcal{K}_{m+1}(A,b)=\mathcal{K}_{m}(A,b)$.

If we consider the constrained minimization%
\begin{equation}
\min_{x\in \mathcal{K}_{m}(A,b)}\left\{ \Vert Ax-b\Vert ^{2}+\lambda
^{2}\Vert Lx\Vert ^{2}\right\} ,  \label{mink}
\end{equation}%
writing $x=W_{m}y_{m}$, $y_{m}\in \mathbb{R}^{m}$, and using (\ref{dec2}),
we obtain%
\begin{equation}
\min_{y_{m}\in \mathbb{R}^{m}}\left\{ \left\Vert \overline{H}%
_{m}y_{m}-\left\Vert b\right\Vert e_{1}\right\Vert ^{2}+\lambda
^{2}\left\Vert LW_{m}y_{m}\right\Vert ^{2}\right\} ,  \label{AT}
\end{equation}%
which is known as the Arnoldi-Tikhonov (AT) method. Dealing with Krylov type
solvers, one generally hopes that a good approximation of the exact solution
can be achieved for $m\ll N$, which, in other words, means that the spectral
properties of the matrix $A$ are rapidly simulated by the ones of $\overline{%
H}_{m}$. This method has been introduced in \cite{Atfirst} in the case of $%
L=I_{N}$ (where $I_{N}$ is the identity matrix of order $N$, so that
$\left\Vert LW_{m}y_{m}\right\Vert =\left\Vert
y_{m}\right\Vert $) with the basic aim of reducing the dimension of the
original problem and to avoid the matrix-vector multiplication with $A^{T}$
used by Lanczos type schemes (see \cite{bjor,Hank} and the references
therein).

It is worth noting that (\ref{AT}) can also be interpreted as an hybrid
method. Indeed, the minimization (\ref{AT}) with $L=I_{N}$ is equivalent to
the inner regularization of the GMRES \cite{KO}. We remark however, that for
$L\neq I_{N}$, the philosophy is completely different, since (\ref{AT})
represents the projection of a regularization, while the hybrid approach
aims to regularize the projected problem. As we shall see, this difference
can be appreciated more clearly whenever a parameter choice rule for $%
\lambda $ is adopted.

As well known, in many applications the use of a suitable regularization
operator $L\neq I_{N}$, may substantially improve the quality of the
approximate solution with respect to the choice of $L=I_{N}$. Anyway, we
need to observe that with a general $L\in \mathbb{R}^{P\times N}$, the
minimization (\ref{AT}) is equivalent to%
\begin{equation}
\min_{y_{m}\in \mathbb{R}^{m}}\left\Vert \left(
\begin{array}{c}
\overline{H}_{m} \\
\lambda LW_{m}%
\end{array}%
\right) y_{m}-\left(
\begin{array}{c}
\left\Vert b\right\Vert e_{1} \\
0%
\end{array}%
\right) \right\Vert ^{2},  \label{rp}
\end{equation}%
so that, for $P\approx N$, the dimension of (\ref{rp}) inherits the
dimension of the original problem. Computationally, the situation can be
efficiently faced by means of the "skinny" QR factorization. Anyway,
assuming that $P\leq N$, in order to work with reduced dimension problems,
we add $N-P$ zero rows to $L$ (which does not alter (\ref{mink})) and
consider the orthogonal projection of $L$ onto $\mathcal{K}_{m}(A,b)$, that
is,%
\begin{equation}
L_{m}:=W_{m}^{T}LW_{m}\in \mathbb{R}^{m\times m}.  \label{nlm}
\end{equation}%
This modification leads to the reduced minimization%
\begin{eqnarray}
&&\min_{y_{m}\in \mathbb{R}^{m}}\left\{ \left\Vert \overline{H}%
_{m}y_{m}-\left\Vert b\right\Vert e_{1}\right\Vert ^{2}+\lambda
^{2}\left\Vert L_{m}y_{m}\right\Vert ^{2}\right\}  \label{rm} \\
&=&\min_{x\in \mathcal{K}_{m}(A,b)}\left\{ \Vert Ax-b\Vert ^{2}+\lambda
^{2}\Vert W_{m}^{T}Lx\Vert ^{2}\right\},  \notag
\end{eqnarray}%
which is not equivalent to (\ref{mink}) anymore. Anyway, the use of $L_{m}$
appears natural in this framework, and it is also justified by the fact that%
\begin{equation*}
\left\Vert W_{m}^{T}Lx\right\Vert \leq \Vert Lx\Vert,
\end{equation*}%
since $\left\Vert W_{m}^{T}Lx\right\Vert =\left\Vert
W_{m}W_{m}^{T}Lx\right\Vert $ and $\left\Vert W_{m}W_{m}^{T}\right\Vert =1$,
being $W_{m}W_{m}^{T}$ an orthogonal projection. We observe moreover that $%
L_{m}$ would be the regularization operator of the projection of\ a Franklin
type regularization \cite{Frank}%
\begin{equation*}
\left( A+\lambda L\right) x=b.
\end{equation*}

\section{Convergence analysis for discrete ill-posed problems}

In what follows we denote by $A=U\Sigma V^{T}\in \mathbb{R}^{N\times N}$ the
SVD of $A$ where $\Sigma =diag(\sigma _{1},...,\sigma _{N})$, and by $%
A_{m}:= $ $U_{m}\Sigma _{m}V_{m}^{T}$ the truncated SVD. We remember that
the matrix $\Delta _{m}:=A-A_{m}$ is such that $\left\Vert \Delta
_{m}\right\Vert =\sigma _{m+1}$.

An important property of the methods based on orthogonal projections such as
the Arnoldi algorithm, is the fast theoretical convergence ($%
h_{m+1,m}\rightarrow 0$) if the matrix $A$ comes from the discretization of
operators whose spectrum is clustered around zero. Denote by $\lambda _{j}$,
$j\geq 1$ the eigenvalues of $A$ and assume that $\left\vert \lambda
_{j}\right\vert \geq \left\vert \lambda _{j+1}\right\vert $ for $j\geq 1$.
We have the following result (cf. \cite[Theorem 5.8.10]{Ne}), in which we
assume $N$ arbitrarily large.

\begin{theorem}
\label{nev}Assume that $1\notin \sigma (A)$ and%
\begin{equation}
\sum_{j\geq 1}\sigma _{j}^{p}<\infty \text{ for a certain }0<p\leq 1\text{. }
\label{psum}
\end{equation}%
Let $p_{m}(z)=\prod\nolimits_{i=1}^{m}(z-\lambda _{i})$. Then%
\begin{equation}
\left\Vert p_{m}(A)\right\Vert \leq \left( \frac{\eta e}{m}\right) ^{m/p},
\label{bn}
\end{equation}%
where%
\begin{equation}
\eta (p)\leq \left( 1+p\right) \sum_{j\geq 1}\sigma _{j}^{p}.  \label{etap}
\end{equation}
\end{theorem}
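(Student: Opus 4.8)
The plan is to reduce the estimate of the operator norm $\|p_m(A)\|$ to two scalar ingredients — a multiplicative eigenvalue/singular-value inequality and a summability estimate — and to control the departure from normality separately. First I would exploit that $p_m$ is monic of degree $m$ with the $m$ dominant eigenvalues as its roots: putting $A$ in Schur form $A=QTQ^{\ast}$ with $T$ upper triangular and diagonal entries $\lambda_1,\dots,\lambda_N$ ordered by decreasing modulus, one has $\|p_m(A)\|=\|p_m(T)\|$, and the leading $m\times m$ principal block $T_{11}$ (upper triangular with diagonal $\lambda_1,\dots,\lambda_m$) has characteristic polynomial exactly $p_m$. By Cayley--Hamilton $p_m(T_{11})=0$, so $p_m(T)$ is supported only on the trailing indices and on the off-diagonal coupling block; morally $p_m(A)$ annihilates the dominant invariant subspace and only ``sees'' the small tail of the spectrum.

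Second, the size of what remains is governed by the product of the dominant eigenvalues. The two scalar inputs are Weyl's multiplicative inequality $\prod_{j=1}^{m}|\lambda_j|\le\prod_{j=1}^{m}\sigma_j$, and the bound $\prod_{j=1}^{m}\sigma_j\le (S/m)^{m/p}$ with $S=\sum_{j\ge1}\sigma_j^{p}$, the latter following from the arithmetic--geometric mean inequality applied to $\sigma_1^{p},\dots,\sigma_m^{p}$ together with the summability hypothesis (\ref{psum}). This already produces the characteristic shape of (\ref{bn}) and the superlinear decay $m^{-m/p}$. In the normal case one has the exact identity $\|p_m(A)\|=\max_{j>m}\prod_{i=1}^{m}|\lambda_j-\lambda_i|$, and since $|\lambda_j-\lambda_i|\le 2|\lambda_i|$ for $i\le m<j$ this yields $\|p_m(A)\|\le 2^{m}\prod_{i=1}^{m}|\lambda_i|\le (2^{p}S/m)^{m/p}$. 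The stated constant $\eta e$ with $\eta\le(1+p)S$ is then recovered from the elementary inequality $2^{p}\le(1+p)e$, valid for $0<p\le1$; more generally the factor $e$ arises from a Stirling-type estimate when one passes from the product of singular values to the exponential of their $p$-th powers, and the hypothesis $1\notin\sigma(A)$ enters to keep the associated (Fredholm-type) determinant and resolvent well defined in this step.

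The main obstacle is the passage from these eigenvalue estimates to a genuine operator-norm bound, that is, the control of the non-normal (strictly upper triangular) part $R$ of the Schur factor $T$. For normal $A$ the argument above is essentially complete, but in general $\|p_m(A)\|\ne\max_j|p_m(\lambda_j)|$, and one must bound the off-diagonal block and the non-normal corrections in $p_m(T)$ without losing the $m^{-m/p}$ decay. I would handle this either by an induction on $m$ that peels off one dominant eigenvalue at a time — writing $p_m(A)=(A-\lambda_m)\,p_{m-1}(A)$ and tracking how each factor redistributes the budget $\sum_j\sigma_j^{p}$ — or, equivalently, through the holomorphic functional calculus $p_m(A)=\frac{1}{2\pi i}\oint_{\Gamma}p_m(z)(zI-A)^{-1}\,dz$, in which case the difficulty is transferred to a uniform resolvent (pseudospectral) bound on a suitable contour. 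Either route must absorb the non-normal contributions into the same constants $\eta$ and $e$, and verifying that this absorption does not degrade the exponent $m/p$ is, I expect, the delicate heart of the argument, and precisely the point where the reference \cite{Ne} does the serious work.
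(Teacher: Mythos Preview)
The paper does not prove this theorem: it is quoted, with the attribution ``cf.\ \cite[Theorem~5.8.10]{Ne}'', from Nevanlinna's monograph, and the text proceeds immediately to its consequences. There is therefore no in-paper argument to compare your proposal against.

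Evaluated on its own, your sketch handles the normal case correctly --- the chain Weyl's inequality, then AM--GM on $\sigma_1^{p},\dots,\sigma_m^{p}$, giving $\prod_{i\le m}\sigma_i\le (S/m)^{m/p}$, combined with $|\lambda_j-\lambda_i|\le 2|\lambda_i|$ for $i\le m<j$, does yield $(2^{p}S/m)^{m/p}\le((1+p)eS/m)^{m/p}$ --- but it is not a proof of the theorem as stated, and you concede as much in your last paragraph. The non-normal passage is not a residual detail to be absorbed into the constants. The Cayley--Hamilton observation $p_m(T_{11})=0$ kills only the leading diagonal block of $p_m(T)$; the $(1,2)$ and $(2,2)$ blocks still contain products of $m$ factors, each carrying the strictly upper-triangular part of $T$, and with no quantitative control on the departure from normality these terms can overwhelm the $m^{-m/p}$ decay. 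Your contour-integral alternative has the same defect: it requires a resolvent bound on a curve around $\sigma(A)$, and for a general non-normal $A$ such a bound does not follow from the singular-value hypothesis (\ref{psum}) by itself.

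Nevanlinna's argument is of a different nature: it works within the Schatten-class calculus, using regularized (Carleman-type) determinants built from the singular-value sequence to bound $\|p_m(A)\|$ directly, never splitting into normal and non-normal parts. The constants $(1+p)\sum_j\sigma_j^{p}$ and the factor $e$ come out of that determinant analysis, not from the elementary inequality $2^{p}\le(1+p)e$. A complete proof needs that machinery; the Schur-form route you outline does not close the gap as written.
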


Since%
\begin{equation}
\prod\nolimits_{i=1}^{m}h_{i+1,i}\leq \left\Vert p_{m}(A)b\right\Vert ,
\label{bnp}
\end{equation}%
for each monic polynomial $p_{m}$ of exact degree $m$ (see \cite[p. 269]%
{Trefe}), Theorem \ref{nev} reveals that the rate of decay of $%
\prod\nolimits_{i=1}^{m}h_{i+1,i}$ is superlinear and depends on the $p$%
-summability of the singular values of $A$. We remark that the superlinear
convergence of certain Krylov subspace methods when applied to linear
equations involving compact operators is known in literature (see e.g. \
\cite{Moret} and the references therein). The rate of convergence depends on
the degree of compactness of the operator, which can be measured in terms of
the decay of the singular values.

Here, dealing with severely ill-posed problems, the typical situation is $%
\sigma _{j}=O(e^{-\alpha j})$, where $\alpha>0 $ handles the
degree of ill-conditioning \cite[Definition 2.42]{Hofm}. In this situation,
the following result expresses more clearly the fast decay of $h_{i+1,i}$
with respect to the value of $\alpha $.

\begin{proposition}
\label{pp1}Let $\sigma _{j}=O(e^{-\alpha j})$. Then, for $m\rightarrow
\infty $,%
\begin{equation}
\left( \prod\nolimits_{i=1}^{m}h_{i+1,i}\right) ^{1/m}\leq ke^{-\frac{%
m\alpha }{e^{2}}+\frac{\alpha +2}{2}+O\left( \frac{1}{m}\right) },
\label{ab}
\end{equation}%
where $k$ is a constant independent of $m$.
\end{proposition}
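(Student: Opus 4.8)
The plan is to obtain (\ref{ab}) by feeding the decay $\sigma_j=O(e^{-\alpha j})$ into Theorem \ref{nev} through the monic-polynomial bound (\ref{bnp}), and then to treat the exponent $1/p$ as a quantity to be optimized. The key observation is that exponential decay makes the summability hypothesis (\ref{psum}) hold for \emph{every} $p>0$, so $p$ becomes a free parameter that I may couple to $m$.

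First I would take in (\ref{bnp}) the very polynomial $p_m(z)=\prod_{i=1}^m(z-\lambda_i)$ used in Theorem \ref{nev}, combine $\|p_m(A)b\|\le\|p_m(A)\|\,\|b\|$ with (\ref{bn}), and take $m$-th roots:
\[
\left(\prod_{i=1}^m h_{i+1,i}\right)^{1/m}\le\|b\|^{1/m}\left(\frac{\eta e}{m}\right)^{1/p},\qquad p\in(0,1].
\]
Here $\|b\|^{1/m}\to1$, so it contributes only to the constant $k$. To bound $\eta$ I would set $\sigma_j\le Ce^{-\alpha j}$ and sum the geometric series, $\sum_{j\ge1}\sigma_j^p\le C^p(1-e^{-\alpha p})^{-1}$, whence $\eta(p)\le(1+p)C^p(1-e^{-\alpha p})^{-1}$ by (\ref{etap}).

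The decisive step is the choice $p=e^2/(\alpha m)$, admissible because $p\le1$ once $m\ge e^2/\alpha$. This value minimizes the leading part $\tfrac1p\bigl(1-\ln(\alpha p)-\ln m\bigr)$ of $\tfrac1p\ln(\eta e/m)$, and substituting it makes that part equal to $-m\alpha/e^2$, the dominant factor in (\ref{ab}). What remains is to expand the lower-order terms as $p\to0$, using $\ln(1+p)=p+O(p^2)$ and $-\ln(1-e^{-\alpha p})=-\ln(\alpha p)+\tfrac{\alpha p}{2}+O(p^2)$, and to collect the pieces surviving at constant order: the linear coefficients $1$ from $\ln(1+p)$ and $\alpha/2$ from the series add up to $\tfrac{\alpha+2}{2}$, while $C^p$ contributes only a constant $\ln C$ to the exponent, i.e. a factor absorbed into $k$.

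The main obstacle I anticipate is precisely this asymptotic bookkeeping. One must truncate the expansions of $\ln(1+p)$ and $\ln(1-e^{-\alpha p})$ consistently so that the stated constant $\tfrac{\alpha+2}{2}$ emerges (note, for instance, that replacing the bound $(1-e^{-\alpha p})^{-1}$ by the exact $\sum_{j\ge1}e^{-\alpha pj}=(e^{\alpha p}-1)^{-1}$ would flip the sign of the $\alpha/2$ term), and one must check that every neglected term is genuinely $O(1/m)$ after the substitution $p=e^2/(\alpha m)$.
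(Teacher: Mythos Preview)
Your proposal is correct and follows essentially the same route as the paper's proof: bound $\eta(p)$ via the geometric series, choose $p=e^{2}/(\alpha m)$, and expand the logarithm for small $p$ to recover the leading term $-m\alpha/e^{2}$ and the constant $(\alpha+2)/2$. The paper organizes the same expansion through the substitution $t=e^{2}/m=\alpha p$, but the computations are identical; your closing remark that the tighter sum $(e^{\alpha p}-1)^{-1}$ would flip the sign of the $\alpha/2$ contribution is a correct observation that goes slightly beyond what the paper records.
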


\begin{proof}
Let $k$ be a constant such that $\sigma _{j}\leq ke^{-\alpha j}$. Then for $%
p>0$
\begin{equation}
\eta (p)\leq \left( 1+p\right) \sum_{j\geq 1}\sigma _{j}^{p}\leq k^{p}\frac{%
\left( 1+p\right) }{1-e^{-\alpha p}},  \label{bn1}
\end{equation}%
(cf. (\ref{etap})). Now consider the approximation
\begin{equation*}
k^{p}\frac{\left( 1+p\right) }{1-e^{-\alpha p}}\approx \frac{1}{\alpha p}=:%
\widetilde{\eta }(p),
\end{equation*}%
which is fairly accurate for $p\approx 0$. Using this approximation in (\ref%
{bn}), we find that the minimum of
\begin{equation*}
\left( \frac{\widetilde{\eta }(p)e}{m}\right) ^{m/p},
\end{equation*}%
is attained for $p^{\ast }=\frac{e^{2}}{m\alpha }$. Using this value, the
bound (\ref{bn1}), and defining $t:=\frac{e^{2}}{m}$, we obtain%
\begin{eqnarray*}
\left( \frac{\eta (p^{\ast })e}{m}\right) ^{m/p^{\ast }} &\leq &k^{m}\left(
\frac{\left( 1+p^{\ast }\right) }{1-e^{-\alpha p^{\ast }}}\frac{e}{m}\right)
^{m/p^{\ast }} \\
&=&k^{m}\exp \left( \frac{m\alpha }{t}\ln \left( \frac{1+\frac{t}{\alpha }}{%
1-e^{-t}}\frac{t}{e}\right) \right) \\
&=&k^{m}\exp \left( \frac{m\alpha }{t}\left( -1+t\left( \frac{1}{\alpha }+%
\frac{1}{2}\right) +O(t^{2})\right) \right) \quad \text{for }t\rightarrow 0
\\
&=&k^{m}\exp \left( -\frac{m^{2}\alpha }{e^{2}}+m\left( \frac{\alpha +2}{2}%
\right) +O(1)\right) \quad \text{for }m\rightarrow \infty .
\end{eqnarray*}%
The result immediately follows from (\ref{bnp}) and (\ref{bn}).
\end{proof}

In Figure \ref{F1} (a)-(b) we experimentally
test the bound (\ref{ab}) working with test problems SHAW and WING,
taken from  Hansen's Regularization Toolbox \cite{H1}.
For these two problems it is known that $\alpha
=2 $ and $\alpha =4.5$ respectively.

In the following results we assume to work with problems in which the
discrete Picard condition is satisfied, that is, $u_{m}^{T}b=O(\sigma _{m})$%
, where $u_{m}$ denotes the $m$-th column of $U$, and $b$ is assumed to be
the exact right-hand side.

\begin{proposition}
\label{lem} Assume that the singular values of $A$ are of the type $\sigma
_{j}=O(e^{-\alpha j})$. Assume moreover that the discrete Picard condition
is satisfied. Let $\widetilde{V}_{m}:=\left[ \widetilde{v}_{0},...,%
\widetilde{v}_{m-1}\right] \in \mathbb{R}^{N\times m}$ where $\widetilde{v}%
_{k}:=A^{k}b/\left\Vert A^{k}b\right\Vert $. If $\widetilde{V}_{m}$ has full
column rank, then there exists $C_{m}\in \mathbb{R}^{m\times m}$
nonsingular, $E_{m},F_{m}\in \mathbb{R}^{N\times m}$, such that%
\begin{eqnarray}
\widetilde{V}_{m} &=&U_{m}C_{m}+E_{m},\quad \left\Vert E_{m}\right\Vert =O(%
\sqrt{m}\sigma _{m}),  \label{rr} \\
U_{m} &=&\widetilde{V}_{m}C_{m}^{-1}+F_{m},\quad \left\Vert F_{m}\Sigma
_{m}\right\Vert =O(\sqrt{m}\sigma _{m}).  \label{rr2}
\end{eqnarray}
\end{proposition}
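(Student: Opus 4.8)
The plan is to exploit the discrete Picard condition together with the geometric decay of the singular values to show that the normalized Krylov vectors $\widetilde v_k = A^kb/\|A^kb\|$ concentrate almost entirely in $\mathrm{span}\{u_1,\dots,u_m\}$, with a tail that decays like $\sigma_m$. First I would expand every vector in the left singular basis $U=[u_1,\dots,u_N]$. The Picard condition gives $|u_j^Tb|=O(\sigma_j)$, which handles $k=0$. For $k\ge1$ the key identity is $u_j^TA^kb = (A^Tu_j)^TA^{k-1}b = \sigma_j\,v_j^TA^{k-1}b$, obtained from $A^Tu_j=\sigma_jv_j$, whence $|u_j^TA^kb|\le\sigma_j\|A^{k-1}b\|$. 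Summing over the tail indices $j>m$ and using $\sum_{j>m}\sigma_j^2=O(\sigma_m^2)$ (which is where $\sigma_j=O(e^{-\alpha j})$ enters), I obtain that the component of $A^kb$ orthogonal to $\mathrm{span}\{u_1,\dots,u_m\}$ has norm $O(\sigma_m\|A^{k-1}b\|)$. After normalization, and using that consecutive Krylov norms have a bounded ratio $\|A^{k-1}b\|/\|A^kb\|=O(1)$ (because $A^kb$ retains a dominant top-singular component, a mild genericity consequence of Picard), each column $\widetilde v_k$ has a tail of norm $O(\sigma_m)$.

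For the first relation I would simply take the orthogonal splitting induced by the projector $P=U_mU_m^T$: set $C_m:=U_m^T\widetilde V_m$ and $E_m:=(I-P)\widetilde V_m$, so that $\widetilde V_m=U_mC_m+E_m$ holds by construction. The column-wise tail bound then gives $\|E_m\|\le\|E_m\|_F=\big(\sum_{k=0}^{m-1}\|(I-P)\widetilde v_k\|^2\big)^{1/2}=O(\sqrt m\,\sigma_m)$, which is precisely the claimed estimate and explains the factor $\sqrt m$. To see that $C_m$ is nonsingular, I would combine the full-column-rank hypothesis on $\widetilde V_m$ with the smallness of $E_m$: since $U_m$ has orthonormal columns, $\sigma_{\min}(C_m)=\sigma_{\min}(P\widetilde V_m)\ge\sigma_{\min}(\widetilde V_m)-\|E_m\|$, which is positive once $\sigma_m$ is small; alternatively one reads off nonsingularity from the (scaled Vandermonde) structure of $C_m$, whose entries are $u_i^TA^kb/\|A^kb\|$ with distinct nodes $\sigma_i$ and nonzero Picard coefficients.

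The second relation follows formally by inverting the first: $U_m=\widetilde V_mC_m^{-1}+F_m$ with $F_m=-E_mC_m^{-1}$. The hard part, and the main obstacle, is the weighted bound $\|F_m\Sigma_m\|=O(\sqrt m\,\sigma_m)$. One cannot argue by submultiplicativity, i.e.\ $\|F_m\Sigma_m\|\le\|E_m\|\,\|C_m^{-1}\Sigma_m\|$, because $C_m$ is severely ill-conditioned: the columns of $\widetilde V_m$ are nearly parallel (all close to the $u_1$ direction), so $\|C_m^{-1}\|$, and in fact $\|C_m^{-1}\Sigma_m\|$, blow up with $m$. The presence of $\Sigma_m$ on the right is exactly what compensates this blow-up, and it must be exploited through cancellation rather than through a norm product. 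Concretely, the $i$-th column of $F_m\Sigma_m$ equals $\sigma_i\big(u_i-\widetilde V_mC_m^{-1}e_i\big)$, i.e.\ the error in representing $u_i$ by a combination of Krylov vectors, weighted by $\sigma_i$; the coefficients $C_m^{-1}e_i$ define a polynomial of degree $<m$ determined by interpolation at the leading nodes $\sigma_1,\dots,\sigma_m$, and the residual is its evaluation at the tail nodes $\sigma_j$ ($j>m$) multiplied by the Picard factors $|u_j^Tb|=O(\sigma_j)$. I would estimate these Lagrange/extrapolation quantities directly, showing that the large inverse-Vandermonde coefficients, once recombined at the much smaller tail nodes and balanced by $\Sigma_m$, contribute only $O(\sqrt m\,\sigma_m)$; here again the geometric spacing $\sigma_j=O(e^{-\alpha j})$ is indispensable.

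Two further points need attention. The clean scaled-Vandermonde picture is exact only when $A$ is normal ($U=V$); for general $A$ the same conclusions must be reached from the recursion $u_j^TA^kb=\sigma_jv_j^TA^{k-1}b$ alone, which still propagates the tail decay but no longer factors neatly, so the interpolation estimate has to be carried out with the perturbed coefficients $u_i^TA^kb/\|A^kb\|$. A cleaner alternative for the weighted bound, which I would try in parallel, is to use the identity $U_m\Sigma_m=A_mV_m$ to write $F_m\Sigma_m=A_mV_m-\widetilde V_mC_m^{-1}\Sigma_m$ and to control the difference via $\|A-A_m\|=\sigma_{m+1}$, thereby importing the $\sigma_m$-scale directly from the truncation error of the SVD rather than from delicate Vandermonde estimates.
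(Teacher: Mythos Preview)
Your overall architecture is exactly the paper's: set $C_m:=U_m^T\widetilde V_m$, $E_m:=(I-U_mU_m^T)\widetilde V_m$, and $F_m:=-E_mC_m^{-1}$. Your column-wise tail bound via $u_j^TA^kb=\sigma_j\,v_j^TA^{k-1}b$ and the geometric tail sum $\sum_{j>m}\sigma_j^2=O(\sigma_m^2)$ is essentially the paper's argument for (\ref{rr}); the paper phrases it as a recursion $\varepsilon^{(k)}=\tfrac{\|A^{k-1}b\|}{\|A^kb\|}A\varepsilon^{(k-1)}+O(\sigma_{m+1})$, and like you it tacitly uses the bounded ratio $\|A^{k-1}b\|/\|A^kb\|=O(1)$ without further comment.

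Where you diverge is the weighted bound $\|F_m\Sigma_m\|=O(\sqrt m\,\sigma_m)$. You correctly diagnose that submultiplicativity fails and that $\Sigma_m$ must cancel the ill-conditioning of $C_m$, but your proposed routes (Lagrange/Vandermonde extrapolation estimates, or rewriting via $U_m\Sigma_m=A_mV_m$) are left as programmes rather than arguments, and the first one is genuinely delicate for non-normal $A$. The paper's device is much more direct and you do not mention it: from the same inequality $|u_i^T\widetilde v_k|=O(\sigma_i)$ you already derived, the $i$-th \emph{row} of $C_m=U_m^T\widetilde V_m$ has all entries $O(\sigma_i)$. By Cramer's rule the $(i,j)$-cofactor expansion then gives $(C_m^{-1})_{ij}=O(1/\sigma_j)$, hence every entry of $C_m^{-1}\Sigma_m$ is $O(1)$. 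Combined with the analogous row structure of $(U_m^\perp)^T\widetilde V_m$ (row $j$ is $O(\sigma_{m+j})$), the matrix $(U_m^\perp)^T\widetilde V_m\,C_m^{-1}\Sigma_m$ has rows of size $O(\sigma_{m+j})$, and the Frobenius-type bound yields $\|F_m\Sigma_m\|=O(\sqrt m\,\sigma_m)$. So the missing idea in your proposal is simply this entrywise Cramer computation on $C_m^{-1}\Sigma_m$; once you see that the row scaling of $C_m$ is exactly $\Sigma_m$, the compensation is automatic and no interpolation analysis is needed.
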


\begin{proof}
Let $U_{m}^{\perp }:=\left[ u_{m+1},...,u_{N}\right] \in \mathbb{R}%
^{(N-m)\times m}$. Writing $c^{(0)}=U_{m}^{T}\widetilde{v}_{0}\in \mathbb{R}%
^{m}$ and $\varepsilon ^{(0)}=U_{m}^{\perp }\left( U_{m}^{\perp }\right) ^{T}%
\widetilde{v}_{0}\in \mathbb{R}^{N}$, we have%
\begin{equation*}
\widetilde{v}_{0}=U_{m}c^{(0)}+\varepsilon ^{(0)}.
\end{equation*}%
The Picard condition implies%
\begin{equation*}
\left\Vert \varepsilon ^{(0)}\right\Vert =\left\Vert \left( U_{m}^{\perp
}\right) ^{T}\widetilde{v}_{0}\right\Vert =O(\sigma _{m}),
\end{equation*}%
since $\sigma _{j}=O(e^{-\alpha j})$ and then using%
\begin{equation}
\left( \sum\limits_{j\geq m+1}e^{-2\alpha j}\right) ^{1/2}\leq \frac{1}{%
\sqrt{2\alpha }}e^{-\alpha m}.  \label{sum}
\end{equation}%
From the relation $\left\Vert A-U_{m}\Sigma _{m}V_{m}^{T}\right\Vert
=\sigma _{m+1}$, after some computation one easily finds that for $0<k\leq
m-1 $,%
\begin{equation*}
\widetilde{v}_{k}=U_{m}c^{(k)}+\varepsilon ^{(k)},
\end{equation*}%
where $c^{(k)}=U_{m}^{T}\widetilde{v}_{k}\in \mathbb{R}^{m}$ and%
\begin{equation*}
\varepsilon ^{(k)}=\frac{\left\Vert A^{k-1}b\right\Vert }{\left\Vert
A^{k}b\right\Vert }A\varepsilon ^{(k-1)}+O(\sigma _{m+1}),
\end{equation*}%
so that $\left\Vert \varepsilon ^{(k)}\right\Vert =O(\sigma _{m})$. Defining
$C_{m}=\left[ c^{(0)},...,c^{(m-1)}\right] =U_{m}^{T}\widetilde{V}_{m}\in
\mathbb{R}^{m\times m}$ and $E_{m}=\left[ \varepsilon ^{(0)},...,\varepsilon
^{(m-1)}\right] \in \mathbb{R}^{N\times m}$ we have proved (\ref{rr}).

By (\ref{rr}), we can write%
\begin{equation}
U_{m}=\widetilde{V}_{m}C_{m}^{-1}-E_{m}C_{m}^{-1},  \label{um}
\end{equation}%
and since $E_{m}=U_{m}^{\perp }\left( U_{m}^{\perp }\right) ^{T}\widetilde{V}%
_{m}$ we have that
\begin{equation}
E_{m}C_{m}^{-1}=U_{m}^{\perp }\left( U_{m}^{\perp }\right) ^{T}\widetilde{V}%
_{m}\left( U_{m}^{T}\widetilde{V}_{m}\right) ^{-1}.  \label{em}
\end{equation}%
Now observe that (\ref{rr}) implies%
\begin{equation*}
\left( U_{m}^{\perp }\right) ^{T}\widetilde{V}_{m}=\left(
\begin{array}{ccc}
O(\sigma _{m+1}) & \cdots & O(\sigma _{m+1}) \\
\vdots &  & \vdots \\
O(\sigma _{N}) & \cdots & O(\sigma _{N})%
\end{array}%
\right) \in \mathbb{R}^{(N-m)\times m},
\end{equation*}%
and%
\begin{equation*}
U_{m}^{T}\widetilde{V}_{m}=\left(
\begin{array}{ccc}
O(\sigma _{1}) & \cdots & O(\sigma _{1}) \\
\vdots &  & \vdots \\
O(\sigma _{m}) & \cdots & O(\sigma _{m})%
\end{array}%
\right) \in \mathbb{R}^{m\times m}.
\end{equation*}%
Using the Cramer rule to invert $U_{m}^{T}\widetilde{V}_{m}$ we find that
each entry of $\left( U_{m}^{T}\widetilde{V}_{m}\right) ^{-1}\Sigma _{m}\in
\mathbb{R}^{m\times m}$ is of the type $O(1)$, and hence%
\begin{equation}
\left( U_{m}^{\perp }\right) ^{T}\widetilde{V}_{m}\left( U_{m}^{T}\widetilde{%
V}_{m}\right) ^{-1}\Sigma _{m}=\left(
\begin{array}{ccc}
O(\sigma _{m+1}) & \cdots & O(\sigma _{m+1}) \\
\vdots &  & \vdots \\
O(\sigma _{N}) & \cdots & O(\sigma _{N})%
\end{array}%
\right) \in \mathbb{R}^{(N-m)\times m}.  \label{mat}
\end{equation}%
Defining $F_{m}=-E_{m}C_{m}^{-1}$ we obtain (\ref{rr2}) by (\ref{um}), (\ref%
{em}) and (\ref{mat}), and applying (\ref{sum}).
\end{proof}

\begin{remark}
The hypothesis $\sigma _{j}=O(e^{-\alpha j})$ of Proposition \ref{lem} is
just used to have $\left\Vert \varepsilon ^{(0)}\right\Vert =O(\sigma _{m})$
by (\ref{sum}). The result of the proposition can be extended to work with
moderately ill-posed problems, in which $\sigma _{j}=O(j^{-\alpha })$,
provided that $\alpha $ is large enough. As consequence in this situation we
would have a slower decay of $\left\Vert E_{m}\right\Vert $ and $\left\Vert
F_{m}\Sigma _{m}\right\Vert $.
\end{remark}

The following result improves the one of Theorem \ref{nev} (which holds
without hypothesis on $b$).

\begin{proposition}
\label{pp2}Under the hypothesis of Proposition \ref{lem}%
\begin{equation*}
h_{m+1,m}=O(\sqrt{m}\sigma _{m}).
\end{equation*}
\end{proposition}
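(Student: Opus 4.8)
The plan is to start from the identity $h_{m+1,m}=w_{m+1}^{T}Aw_{m}$, which follows by taking the $m$-th column of the Arnoldi relation (\ref{dec}) and left-multiplying by $w_{m+1}^{T}$, using the orthonormality of the columns of $W_{m+1}$. Here $w_{m}\in \mathcal{K}_{m}(A,b)$ is a unit vector and $w_{m+1}$ is a unit vector orthogonal to $\mathcal{K}_{m}(A,b)$, so that $P_{m}w_{m+1}=0$, where $P_{m}:=W_{m}W_{m}^{T}$ is the orthogonal projector onto $\mathcal{K}_{m}(A,b)=\mathrm{Range}(\widetilde{V}_{m})$. I would then split $A=A_{m}+\Delta _{m}$ with $A_{m}=U_{m}\Sigma _{m}V_{m}^{T}$ and $\left\Vert \Delta _{m}\right\Vert =\sigma _{m+1}$, obtaining
\begin{equation*}
h_{m+1,m}=w_{m+1}^{T}A_{m}w_{m}+w_{m+1}^{T}\Delta _{m}w_{m}.
\end{equation*}
The perturbation term is harmless, since $\left\vert w_{m+1}^{T}\Delta _{m}w_{m}\right\vert \leq \left\Vert \Delta _{m}\right\Vert =\sigma _{m+1}=O(\sigma _{m})$, so all the difficulty is concentrated in the rank-$m$ term $w_{m+1}^{T}A_{m}w_{m}=(\Sigma _{m}U_{m}^{T}w_{m+1})^{T}(V_{m}^{T}w_{m})$.

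For this term, Cauchy--Schwarz together with $\left\Vert V_{m}^{T}w_{m}\right\Vert \leq \left\Vert w_{m}\right\Vert =1$ reduces everything to proving that $\left\Vert \Sigma _{m}U_{m}^{T}w_{m+1}\right\Vert =O(\sqrt{m}\sigma _{m})$, and this is where I would invoke Proposition \ref{lem}. Multiplying (\ref{rr2}) on the right by $\Sigma _{m}$ gives $U_{m}\Sigma _{m}=\widetilde{V}_{m}C_{m}^{-1}\Sigma _{m}+F_{m}\Sigma _{m}$, whose first summand lies in $\mathcal{K}_{m}(A,b)$; applying $I-P_{m}$ therefore annihilates it and leaves $(I-P_{m})U_{m}\Sigma _{m}=(I-P_{m})F_{m}\Sigma _{m}$. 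Since $P_{m}w_{m+1}=0$ and $I-P_{m}$ is a symmetric idempotent, I can insert the projector for free:
\begin{equation*}
\Sigma _{m}U_{m}^{T}w_{m+1}=(U_{m}\Sigma _{m})^{T}(I-P_{m})w_{m+1}=\left( (I-P_{m})F_{m}\Sigma _{m}\right) ^{T}w_{m+1}.
\end{equation*}
Hence $\left\Vert \Sigma _{m}U_{m}^{T}w_{m+1}\right\Vert \leq \left\Vert F_{m}\Sigma _{m}\right\Vert =O(\sqrt{m}\sigma _{m})$ by (\ref{rr2}), and collecting the two estimates yields $\left\vert h_{m+1,m}\right\vert \leq O(\sqrt{m}\sigma _{m})+\sigma _{m+1}=O(\sqrt{m}\sigma _{m})$, as claimed.

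The step I expect to be the crux is exactly the estimate $\left\Vert \Sigma _{m}U_{m}^{T}w_{m+1}\right\Vert =O(\sqrt{m}\sigma _{m})$, and the delicate point is why one must weight by $\Sigma _{m}$. It is tempting to argue instead that $\mathcal{K}_{m}(A,b)$ is close to $\mathrm{Range}(U_{m})$, so that $w_{m+1}$ is nearly orthogonal to each of $u_{1},\dots ,u_{m}$; but this breaks down for the last direction, because $C_{m}$ is badly conditioned and $u_{m}$ itself need not be $O(\sqrt{m}\sigma _{m})$-close to $\mathcal{K}_{m}(A,b)$ --- only the weighted matrix $F_{m}\Sigma _{m}$, and not $F_{m}$, is guaranteed small in (\ref{rr2}). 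The decisive observation is that the factor $\Sigma _{m}$ supplied by $A_{m}$ is precisely the factor that makes $\left\Vert F_{m}\Sigma _{m}\right\Vert$ small, so keeping $A$ intact (or bounding $\Sigma _{m}U_{m}^{T}w_{m+1}$ entrywise) would spend an unnecessary factor $\sqrt{m}$ and only deliver $O(m\sigma _{m})$. A trivial point left to check is $\sigma _{m+1}=O(\sigma _{m})$, which is immediate from $\sigma _{m+1}\leq \sigma _{m}$.
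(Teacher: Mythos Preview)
Your proof is correct and follows essentially the same route as the paper: start from $h_{m+1,m}=w_{m+1}^{T}Aw_{m}$, split $A=A_{m}+\Delta _{m}$, and for the main term substitute $U_{m}=\widetilde{V}_{m}C_{m}^{-1}+F_{m}$ from (\ref{rr2}) so that $w_{m+1}^{T}\widetilde{V}_{m}=0$ kills the first piece and $\left\Vert F_{m}\Sigma _{m}\right\Vert =O(\sqrt{m}\sigma _{m})$ bounds the remainder. The only cosmetic difference is that the paper substitutes directly into $w_{m+1}^{T}U_{m}\Sigma _{m}V_{m}^{T}w_{m}$ without the intermediate Cauchy--Schwarz and projector-insertion steps, but the key idea and the use of (\ref{rr2}) are identical.
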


\begin{proof}
By (\ref{dec})%
\begin{eqnarray*}
h_{m+1,m} &=&w_{m+1}^{T}Aw_{m} \\
&=&w_{m+1}^{T}\Delta _{m}w_{m}+w_{m+1}^{T}A_{m}w_{m} \\
&=&O(\sigma _{m+1})+w_{m+1}^{T}U_{m}\Sigma _{m}V_{m}^{T}w_{m},
\end{eqnarray*}%
since $\left\Vert \Delta _{m}\right\Vert =\sigma _{m+1}$. Therefore, using (%
\ref{rr2}) we obtain%
\begin{equation*}
h_{m+1,m}=O(\sigma _{m+1})+w_{m+1}^{T}(\widetilde{V}_{m}C_{m}^{-1}+F_{m})%
\Sigma _{m}V_{m}^{T}w_{m}.
\end{equation*}%
which concludes the proof, since $w_{m+1}^{T}\widetilde{V}_{m}=0$ and $%
\left\Vert F_{m}\Sigma _{m}\right\Vert =O(\sqrt{m}\sigma _{m})$.
\end{proof}

In Figure \ref{F1} (c)-(d) we compare the decay of the sequence $\left\{
h_{m+1,m}\right\} _{m\geq 1}$ with that of the singular values, working
again with the test problems SHAW and WING.

\begin{figure}[H]
\centering
\includegraphics[width=0.45\textwidth]{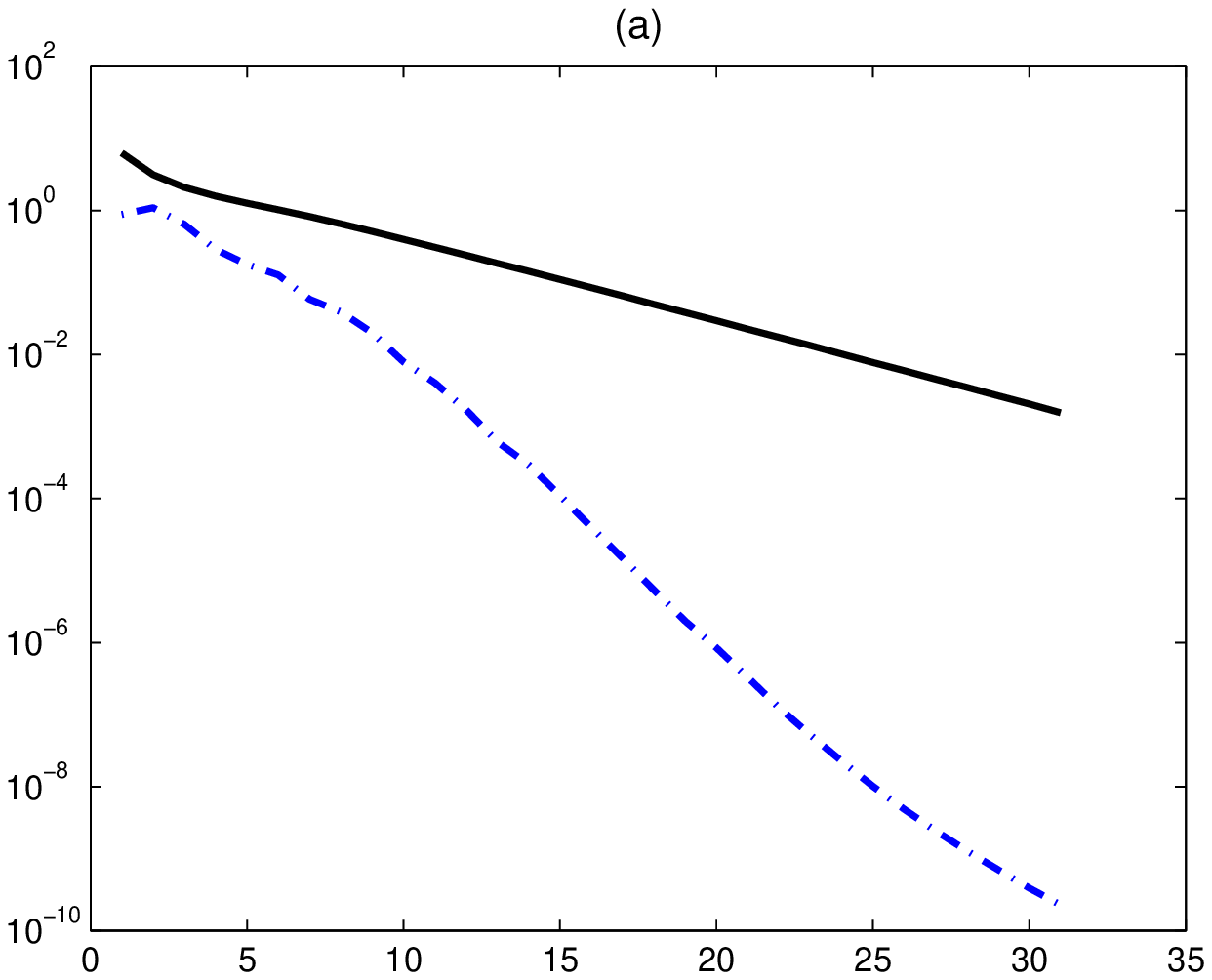} %
\includegraphics[width=0.45\textwidth]{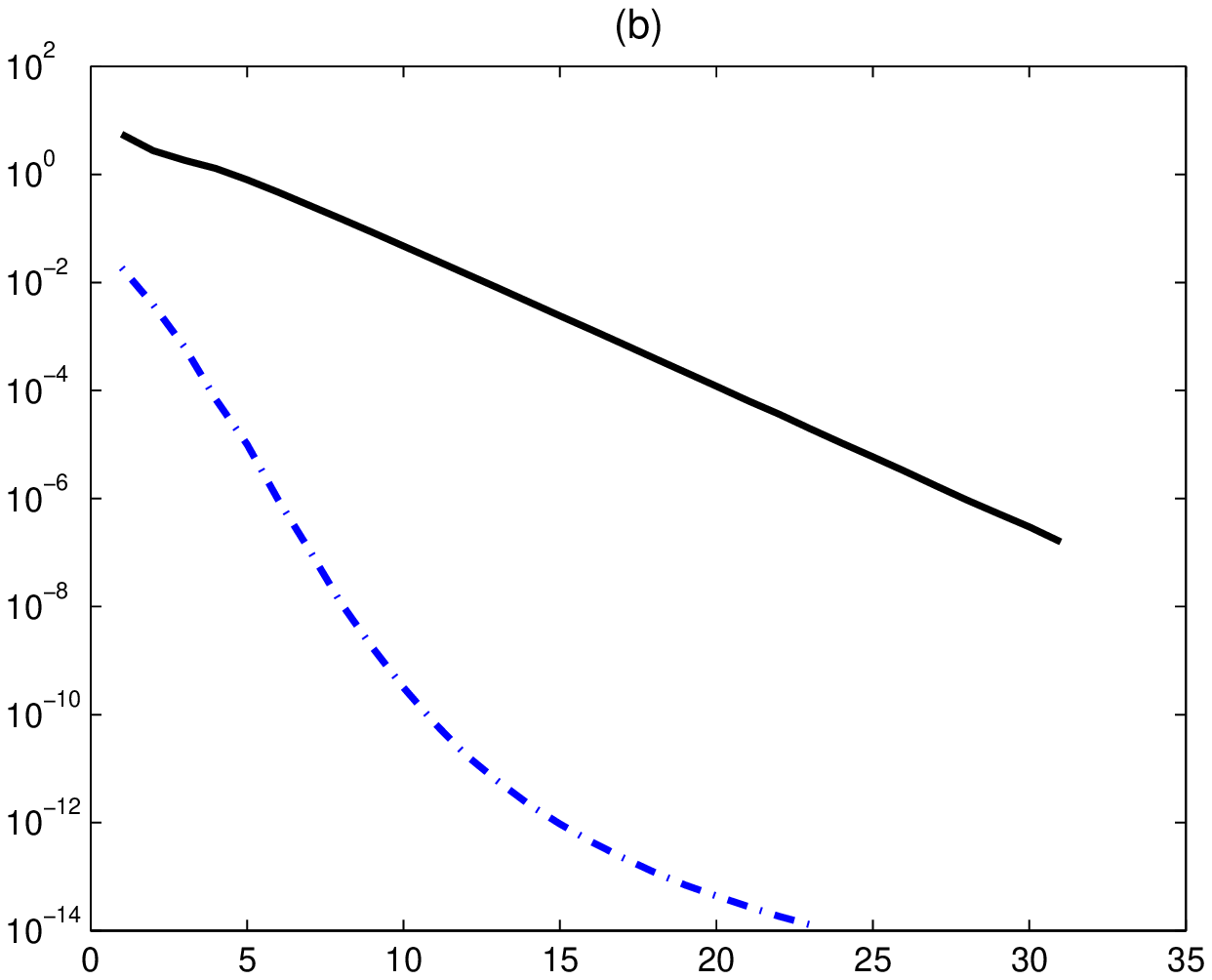}\newline
\includegraphics[width=0.45\textwidth]{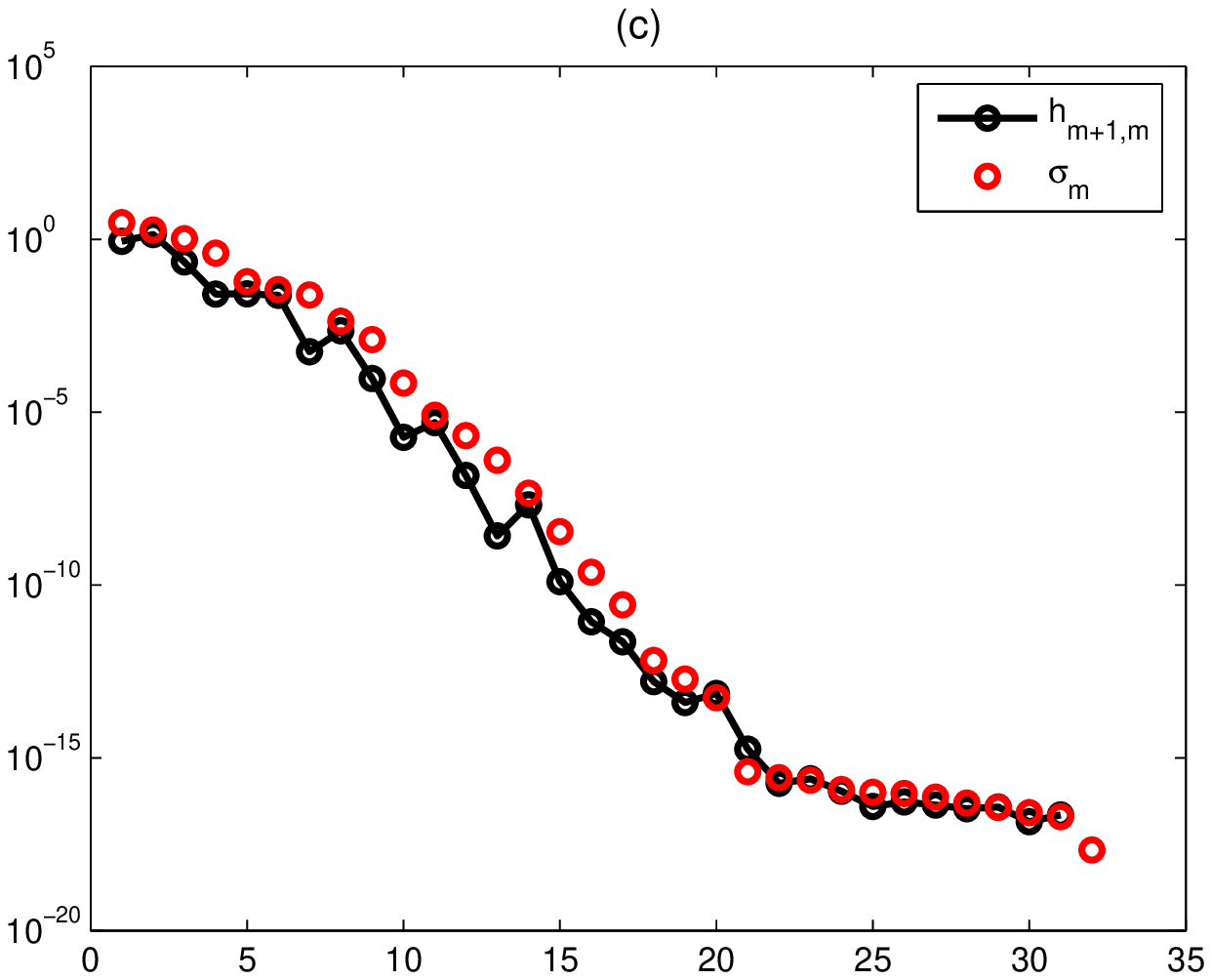} %
\includegraphics[width=0.45\textwidth]{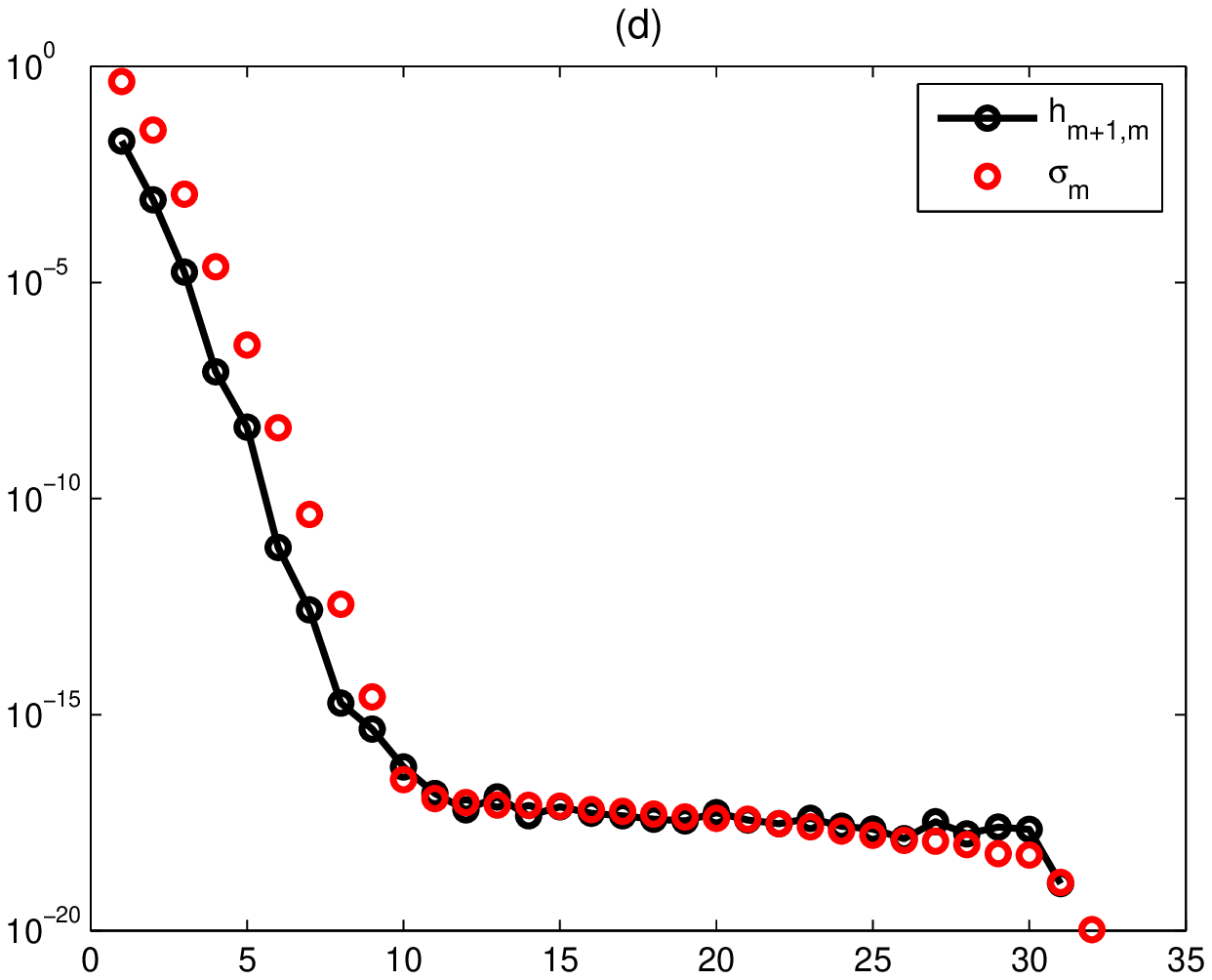}
\caption{\textit{(a)-(b) decay rate of \ $\left(\prod%
\nolimits_{i=1}^{m}h_{i+1,i}\right) ^{1/m}$ (dash-dot line) and bound (\protect\ref{ab}) (solid line),
(c)-(d) decay of $h_{m+1,m}$
and $\protect\sigma _{m}$. On the left the results for SHAW and on the right
the results for WING. In each experiment $N=32$.}}
\label{F1}
\end{figure}

We need to remark that the results of Figure \ref{F1} are obtained working
with the Householder implementation of the Arnoldi algorithm and hence
simulating what happens in exact arithmetics.

\section{The approximation of the SVD}

The use of the Arnoldi algorithm as a method to approximate the marginal
values of the spectrum of a matrix is widely known in literature. We may
refer to \cite[Chapter 6]{Saad} for an exhaustive background. Using similar
arguments, in this section we analyze the convergence of the singular values
of the matrices $\overline{H}_{m}$ to the largest singular values of $A$.
For the Lanczos bidiagonalization method \cite{bjor, OS}, the analysis can
be done by exploiting the connection between this method and the symmetric
Lanczos process (see e.g. \cite{GolLukOv}). The use of the Lanczos
bidiagonalization to construct iteratively the GSVD of ($A,L$) has been
studied in \cite{KHE}.

Let us consider the SVD factorization of $\overline{H}_{m}$, that is, $%
\overline{H}_{m}=U^{(m)}\Sigma ^{(m)}V^{(m)T}$, $U^{(m)}\in \mathbb{R}%
^{\left( m+1\right) \times \left( m+1\right) }$, $V^{(m)}\in \mathbb{R}%
^{m\times m}$ and%
\begin{equation*}
\Sigma ^{(m)}=\left(
\begin{array}{ccc}
\sigma _{1}^{(m)} &  &  \\
& \ddots &  \\
&  & \sigma _{m}^{(m)} \\
0 & \cdots & 0%
\end{array}%
\right) \in \mathbb{R}^{\left( m+1\right) \times m}.
\end{equation*}%
We can state the following results.

\begin{proposition}
Let $\overline{U}_{m+1}=W_{m+1}U^{(m)}\in \mathbb{R}^{N\times \left(
m+1\right) }$ and $\overline{V}_{m}=W_{m}V^{(m)}\in \mathbb{R}^{N\times m}$.
Then%
\begin{equation*}
\left\Vert A-\overline{U}_{m+1}\Sigma ^{(m)}\overline{V}_{m}^{T}\right\Vert
=\left\Vert A(I-W_{m}W_{m}^{T})\right\Vert.
\end{equation*}
\end{proposition}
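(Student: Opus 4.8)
The plan is to observe that the two matrices appearing inside the norms are in fact \emph{identical}, so the statement reduces to an equality of matrices, from which the equality of norms is immediate. The whole argument is a substitution exercise chaining together the definitions of $\overline{U}_{m+1}$ and $\overline{V}_m$, the SVD of $\overline{H}_m$, and the Arnoldi relation (\ref{dec2}).

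First I would insert the definitions $\overline{U}_{m+1}=W_{m+1}U^{(m)}$ and $\overline{V}_m=W_mV^{(m)}$ into the product and collapse the three central factors back into $\overline{H}_m$ via its SVD $\overline{H}_m=U^{(m)}\Sigma^{(m)}V^{(m)T}$:
\begin{equation*}
\overline{U}_{m+1}\Sigma^{(m)}\overline{V}_m^T
= W_{m+1}\,U^{(m)}\Sigma^{(m)}V^{(m)T}\,W_m^T
= W_{m+1}\overline{H}_mW_m^T.
\end{equation*}
Next I would use the Arnoldi decomposition in the form (\ref{dec2}), namely $AW_m=W_{m+1}\overline{H}_m$, to rewrite $W_{m+1}\overline{H}_m$ as $AW_m$, so that
\begin{equation*}
\overline{U}_{m+1}\Sigma^{(m)}\overline{V}_m^T = AW_mW_m^T.
\end{equation*}
Subtracting from $A$ and factoring then gives
\begin{equation*}
A-\overline{U}_{m+1}\Sigma^{(m)}\overline{V}_m^T
= A-AW_mW_m^T = A\left(I-W_mW_m^T\right),
\end{equation*}
and taking norms of both sides yields the claim directly.

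Since the two sides coincide as matrices, there is really no analytical obstacle to overcome: the result is a one-line identity once the Arnoldi relation is invoked. The only point worth a remark is that the orthonormality of the columns of $W_{m+1}$ and $W_m$ (and hence the orthogonality of $U^{(m)}$ and $V^{(m)}$) is \emph{not} needed for this particular equality; rather, it is what justifies subsequently interpreting $\overline{U}_{m+1}$, $\Sigma^{(m)}$, $\overline{V}_m$ as an approximate SVD of $A$, with $\Sigma^{(m)}$ furnishing the approximate singular values and $I-W_mW_m^T$, the orthogonal projector onto the complement of $\mathcal{K}_{m}(A,b)$, governing the size of the approximation error.
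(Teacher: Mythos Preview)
Your proof is correct and follows exactly the same approach as the paper: substitute the definitions, collapse $U^{(m)}\Sigma^{(m)}V^{(m)T}$ back into $\overline{H}_m$, and apply the Arnoldi relation $AW_m=W_{m+1}\overline{H}_m$ to obtain $A-AW_mW_m^T$. The paper's proof is identical in substance, just presented as a single chain of equalities without the additional commentary.
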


\begin{proof}
Using (\ref{dec2}), we have%
\begin{eqnarray*}
A-\overline{U}_{m+1}\Sigma ^{(m)}\overline{V}_{m}^{T}
&=&A-W_{m+1}U^{(m)}\Sigma ^{(m)}V^{(m)T}W_{m}^{T} \\
&=&A-W_{m+1}\overline{H}_{m}W_{m}^{T} \\
&=&A-AW_{m}W_{m}^{T}.
\end{eqnarray*}
\end{proof}

Observe that since $\overline{U}_{m+1}\Sigma ^{(m)}=W_{m+1}\widetilde{U}^{(m)}
\widetilde{\Sigma }^{(m)}$, where $\widetilde{\Sigma }^{(m)}\in \mathbb{R}^{m\times m}$
is just $\Sigma ^{(m)}$ without the last row, and  $\widetilde{U}^{(m)} \in \mathbb{R}^{(m+1)\times m}$
is $U^{(m)}$ without the last column,
the above result states that
the triplet $\left( W_{m+1}\widetilde{U}^{(m)},\widetilde{\Sigma }^{(m)},W_{m}V^{(m)}%
\right) $ defines an approximation of the truncated SVD of $A$, which cannot
be too bad since $\left\Vert A(I-W_{m}W_{m}^{T})\right\Vert \leq \left\Vert
A\right\Vert $. Moreover, it states that if the Arnoldi algorithm does not
terminate before $N$ iterations, then it produces the complete SVD. The
following result gives some additional information.

\begin{proposition}
\label{p1}Let $u_{k}^{(m)}\in \mathbb{R}^{m+1}$ and $v_{k}^{(m)}\in \mathbb{R%
}^{m}$ be respectively the right and left singular vectors relative to the
singular value $\sigma _{k}^{(m)}$ of $\overline{H}_{m}$, that is, $%
\overline{H}_{m}v_{k}^{(m)}=\sigma _{k}^{(m)}u_{k}^{(m)}$ and $\overline{H}%
_{m}^{T}u_{k}^{(m)}=\sigma _{k}^{(m)}v_{k}^{(m)}$, with $1\leq k\leq m$.
Then defining $\overline{u}_{k}=W_{m+1}u_{k}^{(m)}$ and $\overline{v}%
_{k}=W_{m}v_{k}^{(m)}$ we have that%
\begin{eqnarray}
A\overline{v}_{k}-\sigma _{k}^{(m)}\overline{u}_{k} &=&0,  \label{r1} \\
W_{m}^{T}(A^{T}\overline{u}_{k}-\sigma _{k}^{(m)}\overline{v}_{k}) &=&0.
\label{r2}
\end{eqnarray}
\end{proposition}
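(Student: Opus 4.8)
The plan is to obtain both identities by pushing the singular-vector relations for $\overline{H}_m$ through the Arnoldi factorization (\ref{dec2}), using only that $W_{m+1}$ and $W_m$ have orthonormal columns. No use of the Picard condition or of the convergence estimates of the previous sections is needed; this is a purely algebraic consequence of the projection framework.

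First I would prove (\ref{r1}), which in fact holds as an exact, unprojected equality. Substituting $\overline{v}_k=W_m v_k^{(m)}$ and applying $AW_m=W_{m+1}\overline{H}_m$ gives $A\overline{v}_k=W_{m+1}\overline{H}_m v_k^{(m)}$; the defining relation $\overline{H}_m v_k^{(m)}=\sigma_k^{(m)}u_k^{(m)}$ then yields $A\overline{v}_k=\sigma_k^{(m)}W_{m+1}u_k^{(m)}=\sigma_k^{(m)}\overline{u}_k$. No projection is required here because, by construction, $A$ maps $\spanr(W_m)$ into $\spanr(W_{m+1})$.

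For (\ref{r2}) I would transpose (\ref{dec2}) and multiply on the right by $W_{m+1}$. Since $W_{m+1}^T W_{m+1}=I_{m+1}$, this produces the identity $W_m^T A^T W_{m+1}=\overline{H}_m^T$. Applying it to $u_k^{(m)}$ and recalling $\overline{u}_k=W_{m+1}u_k^{(m)}$ gives $W_m^T A^T\overline{u}_k=\overline{H}_m^T u_k^{(m)}=\sigma_k^{(m)}v_k^{(m)}$, while $W_m^T\overline{v}_k=W_m^T W_m v_k^{(m)}=v_k^{(m)}$ by orthonormality of $W_m$. Subtracting and factoring out $\sigma_k^{(m)}$ gives exactly (\ref{r2}).

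The only point worth flagging is the asymmetry between the two relations: (\ref{r1}) is exact, whereas (\ref{r2}) survives only after projection by $W_m^T$. The reason is the extra term $h_{m+1,m}w_{m+1}e_m^T$ in the decomposition (\ref{dec}): the Arnoldi relation controls $AW_m$ but says nothing directly about $A^T W_{m+1}$, so $A^T\overline{u}_k$ generally carries a nonzero component along $w_{m+1}$ (and possibly outside $\spanr(W_{m+1})$), which is precisely what $W_m^T$ annihilates. Recognizing that this residual cannot be removed, so that the sharpest available statement is the Galerkin-type orthogonality (\ref{r2}) rather than a vanishing residual, is the only conceptual step; the remaining manipulations are routine matrix algebra.
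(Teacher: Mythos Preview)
Your argument is correct and matches the paper's own proof: both identities are derived from the Arnoldi relation $AW_m=W_{m+1}\overline{H}_m$ (and its transpose, yielding $\overline{H}_m^{T}=W_m^{T}A^{T}W_{m+1}$) together with the singular-vector relations for $\overline{H}_m$ and the orthonormality of the Krylov bases. Your additional discussion of the asymmetry between (\ref{r1}) and (\ref{r2}) also aligns with the commentary the paper gives immediately after the proposition.
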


\begin{proof}
(\ref{r1}) follows directly by (\ref{dec2}). Moreover, since%
\begin{equation*}
\overline{H}_{m}^{T}u_{k}^{(m)}-\sigma _{k}^{(m)}v_{k}^{(m)}=0,
\end{equation*}%
using $\overline{H}_{m}^{T}=W_{m}^{T}A^{T}W_{m+1}$, and the definition of $%
\overline{u}_{k}$ and $\overline{v}_{k}$, we easily obtain (\ref{r2}).
\end{proof}

\begin{remark}
Using the square matrix $H_{m}$ to approximate the singular values of $A$,
that is, computing the SVD $H_{m}=U^{(m)}\Sigma ^{(m)}V^{(m)T}$, where now $%
U^{(m)},\Sigma ^{(m)},V^{(m)}\in \mathbb{R}^{m\times m}$, if $%
H_{m}v_{k}^{(m)}=\sigma _{k}^{(m)}u_{k}^{(m)}$ then
\begin{equation}
\left\Vert A\overline{v}_{k}-\sigma _{k}^{(m)}\overline{u}_{k}\right\Vert
\leq h_{m+1,m}\quad \text{with\quad }\overline{u}_{k}=W_{m}u_{k}^{(m)},%
\overline{v}_{k}=W_{m}v_{k}^{(m)}.  \label{rs}
\end{equation}%
The above relation is very similar to the one which arises when using the
eigenvalues of $H_{m}$ (the Ritz values) to approximate the eigenvalues of $%
A $ \cite[\S 6.2]{Saad}. Note moreover that whenever $h_{m+1,m}\approx 0$,
and hence very quickly for linear ill-posed problems (see Section 3), the
use of $\overline{H}_{m}$ or $H_{m}$ is almost equivalent to approximate the
largest singular values of $A$.
\end{remark}

The Galerkin condition (\ref{r2}) is consequence of the fact that the
Arnoldi algorithm does not work with the transpose. Obviously, if $A=A^{T}$,
the algorithm reduces to the symmetric Lanczos process and, under the
hypothesis of Proposition \ref{p1}, we easily obtain $A^{T}\overline{u}%
_{k}-\sigma _{k}^{(m)}\overline{v}_{k}=0$. In the general case of $A\neq
A^{T}$, Proposition \ref{p1} ensures that since $\overline{v}%
_{k}=W_{m}v_{k}^{(m)}\in \mathcal{K}_{m}(A,b)$, by (\ref{r2}) the vector $%
\sigma _{k}^{(m)}\overline{v}_{k}$ is just the orthogonal projection of $%
A^{T}\overline{u}_{k}$ onto $\mathcal{K}_{m}(A,b)$, that is, $\sigma
_{k}^{(m)}\overline{v}_{k}=W_{m}W_{m}^{T}A^{T}\overline{u}_{k}$, which
implies%
\begin{equation}
\left\Vert A^{T}\overline{u}_{k}-\sigma _{k}^{(m)}\overline{v}%
_{k}\right\Vert \leq \left\Vert
(I-W_{m}W_{m}^{T})A^{T}W_{m}W_{m}^{T}\right\Vert .  \label{r3}
\end{equation}%
This means that the approximation is good if $A^{T}\overline{u}_{k}$ is
close to $\mathcal{K}_{m}(A,b)$. It is interesting to observe that (\ref{r3})
is just the "transpose version" of (\ref{rs}) since%
\begin{equation*}
h_{m+1,m}=\left\Vert (I-W_{m}W_{m}^{T})AW_{m}W_{m}^{T}\right\Vert,
\end{equation*}%
which can be easily proved using again (\ref{dec}) (cf. \cite[Chapter 4]%
{Saad}).

Experimentally, one observes that the Arnoldi algorithm seems to be very
efficient for approximating the largest singular values for discrete
ill-posed problems. In order to have a-posteriori strategy to monitor
step-by-step the quality of approximation, we can state the following.

\begin{proposition}
\label{pp}Assume that the matrix $A$ has full rank. Then
\begin{equation}
\left\Vert A^{T}\overline{u}_{k}-\sigma _{k}^{(m)}\overline{v}%
_{k}\right\Vert \leq \left\Vert W_{m+1}^{T}AW_{m}^{\bot }\right\Vert ,
\label{ddm}
\end{equation}%
where $\overline{u}_{k}$, $\overline{v}_{k}$, $\sigma _{k}^{(m)}$ are
defined as in Proposition \ref{p1}, and $W_{m}^{\bot }=[w_{m+1},...,w_{N}]$.
\end{proposition}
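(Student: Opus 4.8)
The plan is to build directly on the Galerkin-type identity (\ref{r2}) from Proposition \ref{p1}, which tells us that the residual $r_k := A^{T}\overline{u}_{k}-\sigma_{k}^{(m)}\overline{v}_{k}$ is orthogonal to the whole Krylov subspace $\mathcal{K}_{m}(A,b)=\mathrm{span}\{w_{1},\dots,w_{m}\}$. Since $[W_{m}\ W_{m}^{\bot}]$, with $W_{m}^{\bot}=[w_{m+1},\dots,w_{N}]$, is by construction a complete orthonormal basis of $\mathbb{R}^{N}$ (this is where the full-rank hypothesis on $A$ enters: it guarantees that the Arnoldi recurrence does not terminate before step $N$, so that $W_{m}^{\bot}$ really supplies the $N-m$ missing basis vectors), I would split the norm of $r_{k}$ against this basis,
\[
\|r_{k}\|^{2}=\|W_{m}^{T}r_{k}\|^{2}+\|(W_{m}^{\bot})^{T}r_{k}\|^{2}.
\]
By (\ref{r2}) the first summand vanishes, leaving $\|r_{k}\|=\|(W_{m}^{\bot})^{T}r_{k}\|$.

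Next I would simplify $(W_{m}^{\bot})^{T}r_{k}$. The contribution of $\overline{v}_{k}$ disappears, because $\overline{v}_{k}=W_{m}v_{k}^{(m)}$ lies in the range of $W_{m}$ and the columns of $W_{m}^{\bot}$ are orthogonal to those of $W_{m}$, so $(W_{m}^{\bot})^{T}\overline{v}_{k}=0$. What survives is
\[
(W_{m}^{\bot})^{T}r_{k}=(W_{m}^{\bot})^{T}A^{T}\overline{u}_{k}=(W_{m}^{\bot})^{T}A^{T}W_{m+1}u_{k}^{(m)},
\]
using $\overline{u}_{k}=W_{m+1}u_{k}^{(m)}$. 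Finally, pulling out the unit left singular vector $u_{k}^{(m)}$ (recall $\|u_{k}^{(m)}\|=1$) and using the invariance of the spectral norm under transposition together with $(W_{m}^{\bot})^{T}A^{T}W_{m+1}=(W_{m+1}^{T}AW_{m}^{\bot})^{T}$, I obtain
\[
\|r_{k}\|\le\|(W_{m}^{\bot})^{T}A^{T}W_{m+1}\|=\|W_{m+1}^{T}AW_{m}^{\bot}\|,
\]
which is exactly (\ref{ddm}).

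I do not anticipate any real obstacle here: once (\ref{r2}) is available the argument is essentially bookkeeping with orthogonal projections and the elementary bound $\|M v\|\le\|M\|$ for a unit vector $v$. The only point I would take care to state precisely is the role of the full-rank assumption, invoked so that $[W_{m}\ W_{m}^{\bot}]$ is genuinely a full orthonormal basis of $\mathbb{R}^{N}$; this is what licenses the replacement of $\|r_{k}\|$ by $\|(W_{m}^{\bot})^{T}r_{k}\|$ without loss, and it is the subtle step in an otherwise direct computation.
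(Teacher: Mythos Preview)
Your proof is correct and follows exactly the approach of the paper: use the Galerkin condition (\ref{r2}) to replace $\|r_k\|$ by $\|(W_m^{\bot})^{T}r_k\|$, drop the $\overline{v}_k$-term by orthogonality, and bound via $\|u_k^{(m)}\|=1$. The paper states only the resulting inequality $\|A^{T}\overline{u}_{k}-\sigma_{k}^{(m)}\overline{v}_{k}\|\le\|(W_{m}^{\bot})^{T}A^{T}W_{m+1}\|$ with essentially no intermediate justification, so your write-up is simply a more explicit version of the same argument.
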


\begin{proof}
Since $\overline{v}_{k}\in \mathcal{K}_{m}(A,b)$, and $\overline{u}%
_{k}=W_{m+1}u_{k}^{(m)}$, by (\ref{r2})
\begin{equation}
\left\Vert A^{T}\overline{u}_{k}-\sigma _{k}^{(m)}\overline{v}%
_{k}\right\Vert \leq \left\Vert \left( W_{m}^{\bot }\right)
^{T}A^{T}W_{m+1}\right\Vert .  \label{rr3}
\end{equation}
\end{proof}

Formula (\ref{ddm}) is rather interesting because since $%
h_{ij}=w_{i}^{T}Aw_{j}$ from the Arnoldi algorithm,%
\begin{equation*}
W_{m+1}^{T}AW_{m}^{\bot }=\left[
\begin{array}{ccc}
h_{1,m+1} & \cdots & h_{1,N} \\
\vdots &  & \vdots \\
h_{m+1,m+1} & \cdots & h_{m+1,N}%
\end{array}%
\right] .
\end{equation*}%
Since in many cases the elements of the projected matrix $H_{m}$ tends to
annihilates departing from the diagonal (this is the basic assumption of the
methods based on the incomplete orthogonalization, see e.g. \cite{Saad2}),
one may obtain useful estimates for the bound (\ref{ddm}) working with few
columns of $W_{m+1}^{T}AW_{m}^{\bot }$, that is, with few columns of $%
W_{m}^{\bot }$, and hence obtaining a-posteriori estimates for the quality of
the SVD approximation. In order to have an experimental confirmation of this
statement, in Figure \ref{F2} we show the behavior of $\left\Vert A-%
\overline{U}_{m+1}\Sigma ^{(m)}\overline{V}_{m}^{T}\right\Vert $ and $%
\left\Vert W_{m+1}^{T}Aw_{m+1}\right\Vert $, for some test problems.
Note that $\left\Vert W_{m+1}^{T}Aw_{m+1}\right\Vert $ comes from the bound (\ref{ddm})
with $W_{m}^{\bot }$ replaced by $w_{m+1}.$

We remark that Proposition \ref{pp2} and \ref{pp} can be used to arrest the procedure whenever
the noise level $\varepsilon$ is known, since it is generally useless to continue with the SVD approximation if we find $\sigma _{k}^{(m)} << \varepsilon $, for a certain $k$ and $m$. Indeed,
in this situation the Picard condition is no longer satisfied since typically
$U_{m}^{T}\approx \varepsilon$ for $m$ large enough.

For what concerns the generalized SVD of the matrix pair $(A,L)$, let $AX=US$
and $LX=VC$, where $S=diag(s_{1},...,s_{N})$ and $C=diag(c_{1},...,c_{N})$, $%
X\in \mathbb{R}^{N\times N}$ is nonsingular and $U,V\in \mathbb{R}^{N\times
N}$ are orthogonal. Moreover let $\overline{H}_{m}X^{(m)}=U^{(m)}S^{(m)}$\
and $L_{m}X^{(m)}=V^{(m)}C^{(m)}$, where $%
S^{(m)}=diag(s_{1}^{(m)},...,s_{m}^{(m)})$ and $%
C^{(m)}=diag(c_{1}^{(m)},...,c_{m}^{(m)})$, be the generalized SVD of the
matrix pair $(\overline{H}_{m},L_{m})$. In this situation, for the
convergence of the approximated generalized singular values and vectors, we
can state the following result.

\begin{figure}[H]
\centering
\includegraphics[width=0.45\textwidth]{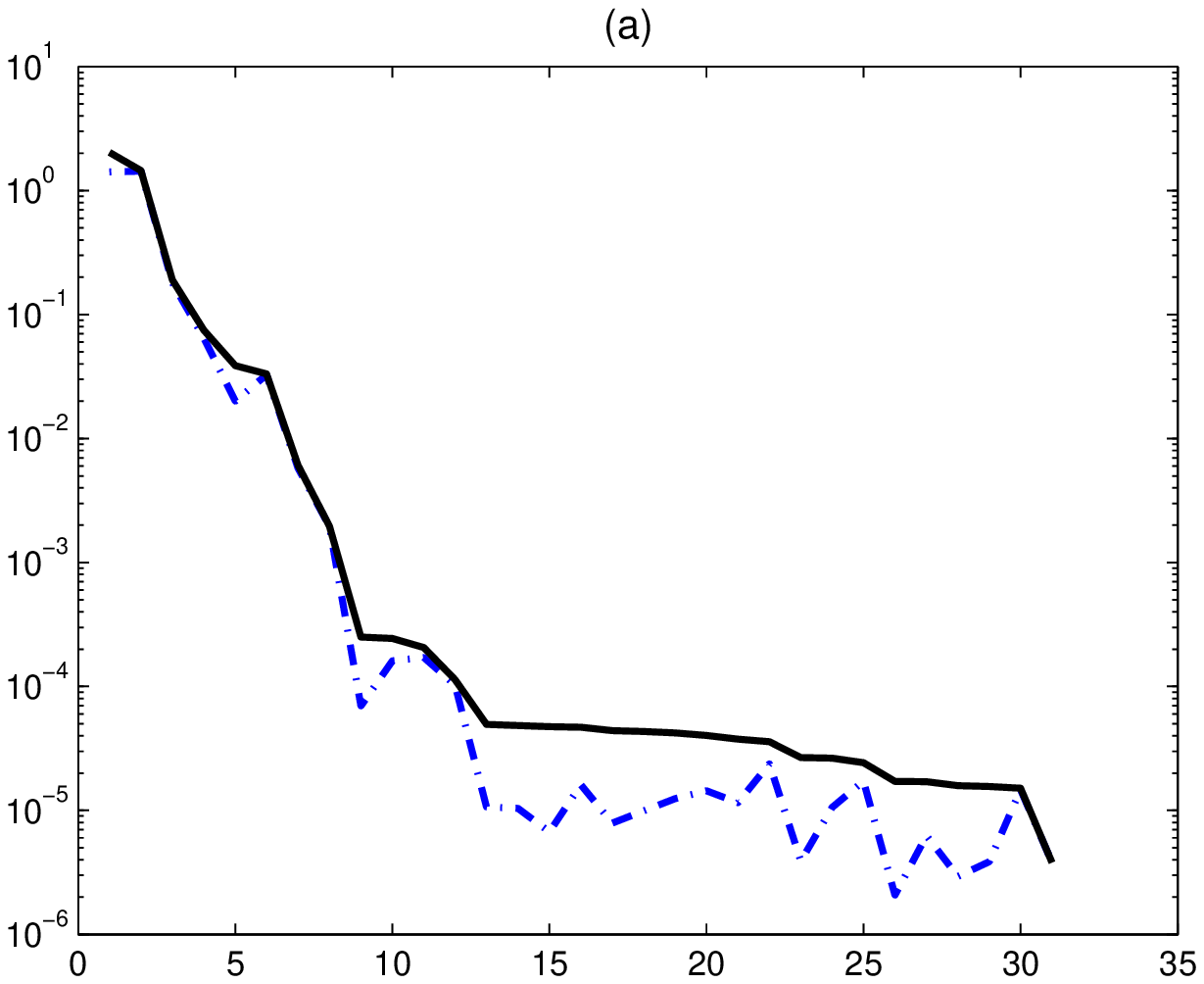} %
\includegraphics[width=0.45\textwidth]{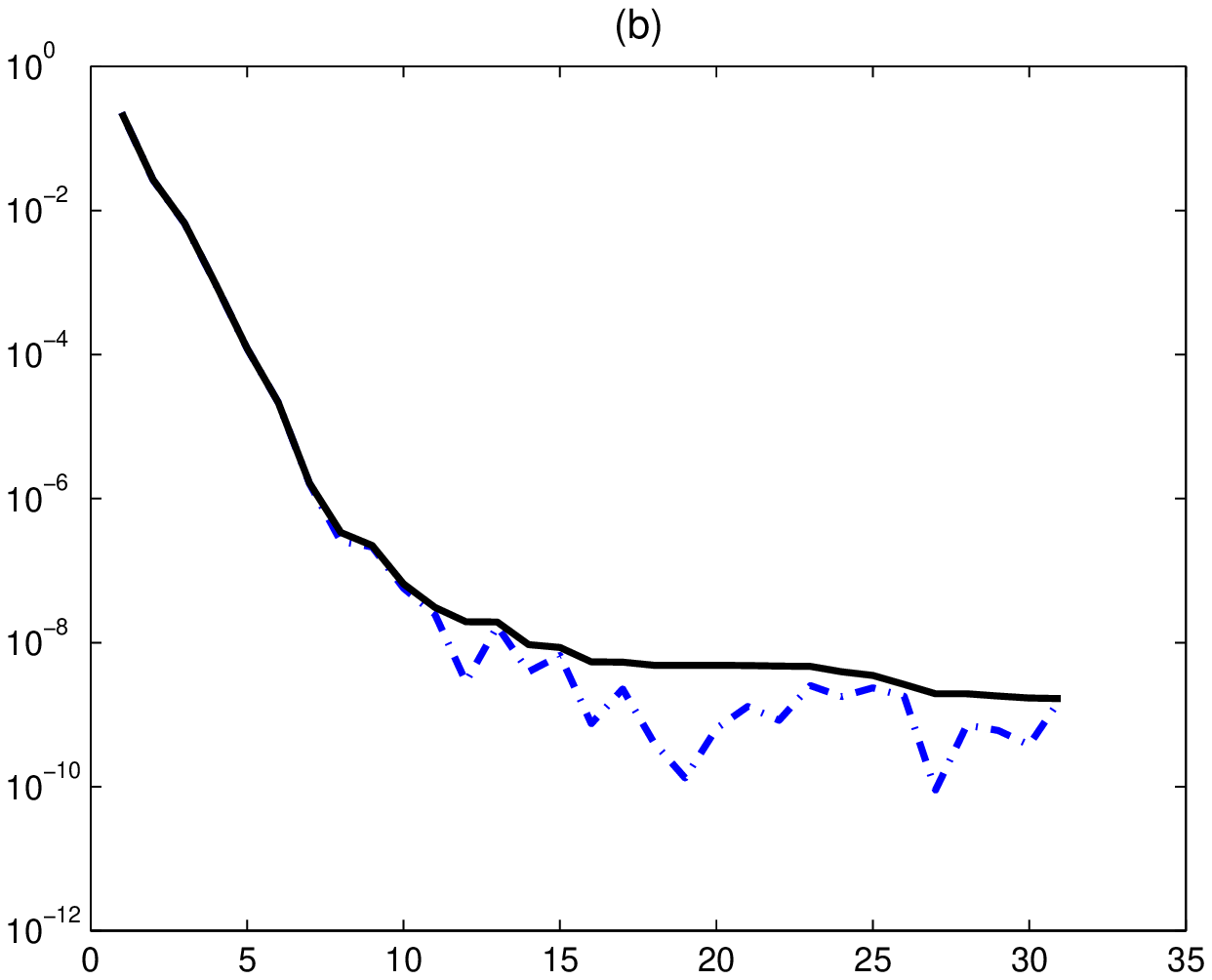}\newline
\includegraphics[width=0.45\textwidth]{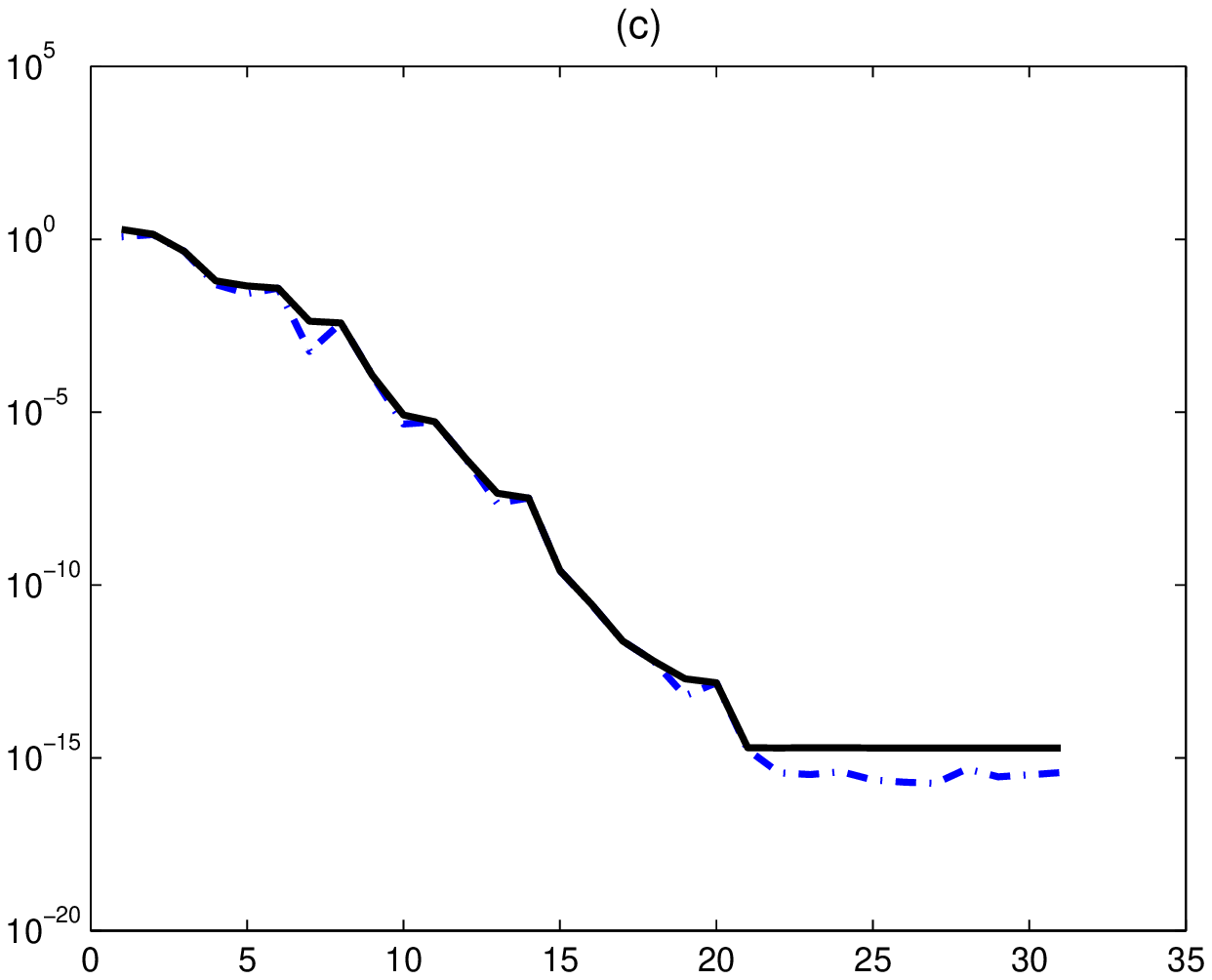} %
\includegraphics[width=0.45\textwidth]{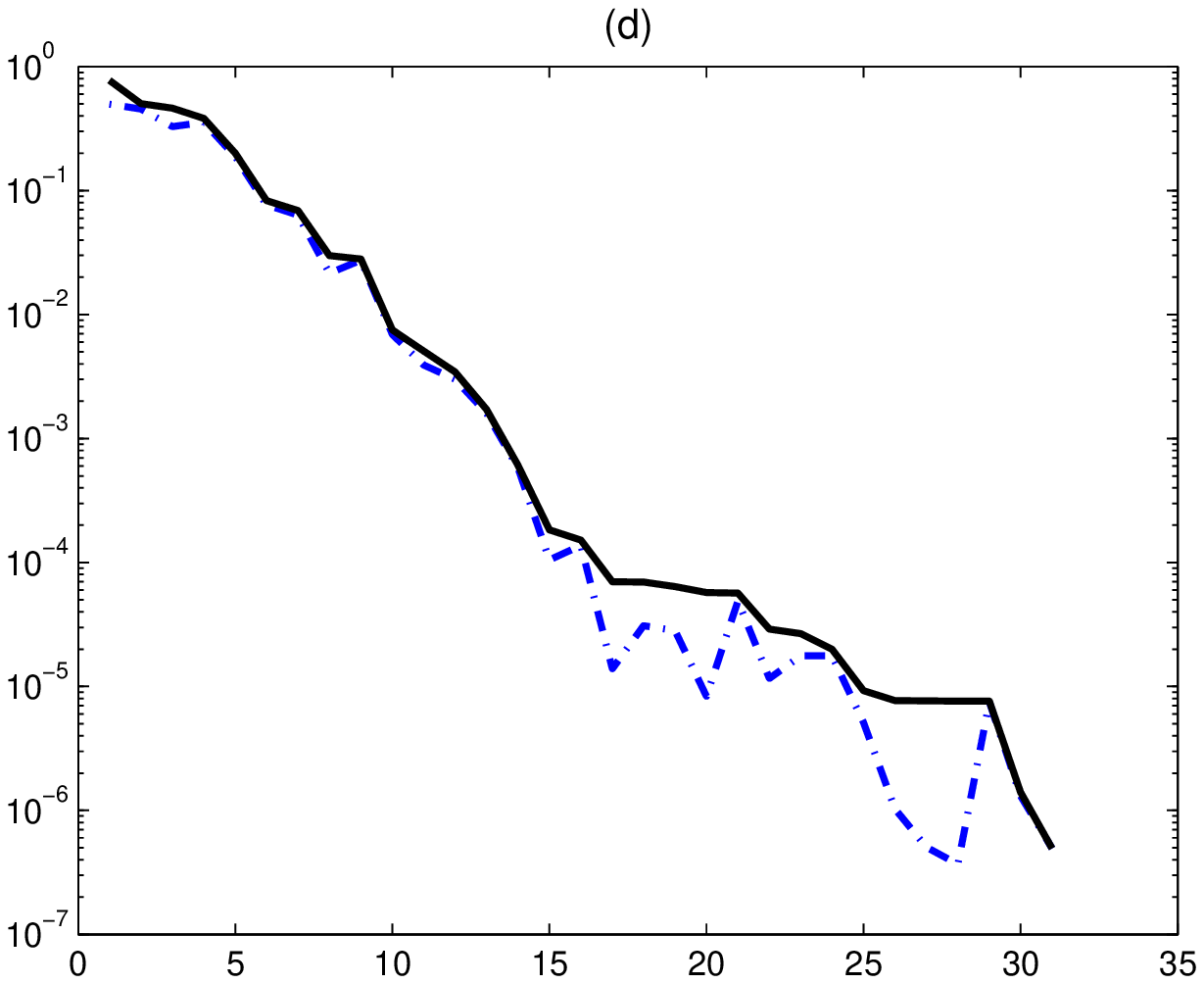}
\caption{\textit{Decay behavior of }$\left\Vert A-\overline{U}_{m+1}\Sigma
^{(m)}\overline{V}_{m}^{T}\right\Vert $\textit{\ (solid line) and lower bound} $\left\Vert
W_{m+1}^{T}Aw_{m+1}\right\Vert $\textit{\ arising from Proposition \ref{pp} (dash-dot line) for BAART (a),
WING (b), SHAW (c) and I\_LAPLACE (d). The dimension of each problem is }$N=32$\textit{.}
}
\label{F2}
\end{figure}

\begin{proposition}
\label{p2}Let $u_{k}^{(m)}$, $v_{k}^{(m)}$ and $x_{k}^{(m)}$ be the $k$-th
column of the matrices $U^{(m)}\in \mathbb{R}^{\left( m+1\right) \times m}$,
$V^{(m)}\in \mathbb{R}^{m\times m}$ and $X^{(m)}\in \mathbb{R}^{m\times m}$
respectively. Then defining $\overline{u}_{k}=W_{m+1}u_{k}^{(m)}$, $%
\overline{v}_{k}=W_{m}v_{k}^{(m)}$ and $\overline{x}_{k}=W_{m}x_{k}^{(m)}$,
we have%
\begin{eqnarray}
A\overline{x}_{k}-s_{k}^{(m)}\overline{u}_{k} &=&0,  \label{g1} \\
W_{m}^{T}(L\overline{x}_{k}-c_{k}^{(m)}\overline{v}_{k}) &=&0.  \label{g2}
\end{eqnarray}
\end{proposition}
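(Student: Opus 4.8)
The plan is to mirror the proof of Proposition~\ref{p1}, since the two claimed identities decouple cleanly: relation (\ref{g1}) involves only the Arnoldi factorization relating $A$ to $\overline{H}_m$, whereas (\ref{g2}) involves only the compression defining $L_m$. In both cases I would begin by reading off the column-wise form of the generalized SVD of the pair $(\overline{H}_m, L_m)$: from $\overline{H}_m X^{(m)} = U^{(m)} S^{(m)}$ and $L_m X^{(m)} = V^{(m)} C^{(m)}$, the $k$-th columns give the elementary relations $\overline{H}_m x_k^{(m)} = s_k^{(m)} u_k^{(m)}$ and $L_m x_k^{(m)} = c_k^{(m)} v_k^{(m)}$, which are the only facts about the projected GSVD that I will need.

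For (\ref{g1}) I would simply substitute the Arnoldi identity (\ref{dec2}), namely $AW_m = W_{m+1}\overline{H}_m$, into $A\overline{x}_k = AW_m x_k^{(m)}$, obtaining
\[
A\overline{x}_k = W_{m+1}\overline{H}_m x_k^{(m)} = s_k^{(m)} W_{m+1} u_k^{(m)} = s_k^{(m)}\overline{u}_k,
\]
which is exactly (\ref{g1}). This step is immediate and parallels the derivation of (\ref{r1}) in Proposition~\ref{p1}.

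For (\ref{g2}) I would invoke the definition $L_m = W_m^T L W_m$ from (\ref{nlm}) to rewrite $L_m x_k^{(m)} = W_m^T L (W_m x_k^{(m)}) = W_m^T L\overline{x}_k$, and then use the orthonormality of the Arnoldi basis, $W_m^T W_m = I_m$, to get $W_m^T \overline{v}_k = W_m^T W_m v_k^{(m)} = v_k^{(m)}$. Combining these with the column relation for $L_m$ yields
\[
W_m^T\bigl(L\overline{x}_k - c_k^{(m)}\overline{v}_k\bigr) = L_m x_k^{(m)} - c_k^{(m)} v_k^{(m)} = 0,
\]
establishing (\ref{g2}).

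There is no genuine obstacle here; the only point to handle with care is the asymmetry between the two relations, which is the generalized analogue of the gap between (\ref{r1}) and (\ref{r2}). Relation (\ref{g1}) holds exactly because $AW_m$ is faithfully represented by $\overline{H}_m$ on the $(m+1)$-dimensional range of $W_{m+1}$, whereas (\ref{g2}) can only be asserted after the projection $W_m^T$ onto $\mathcal{K}_m(A,b)$, precisely because $L_m$ is itself the compression $W_m^T L W_m$ rather than the action of $L$ on the full space. Tracking which factorization lives in dimension $m$ versus $m+1$ is therefore the one bookkeeping matter to keep straight, but it requires no new idea beyond those already used for Proposition~\ref{p1}.
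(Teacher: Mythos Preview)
Your proposal is correct and matches the paper's approach; in fact you spell out the details that the paper omits, since its proof of Proposition~\ref{p2} is the single sentence ``Similarly to Proposition~\ref{p1}, (\ref{g1}) and (\ref{g2}) follow immediately from the basic relation (\ref{dec2}).'' Your observation that (\ref{g2}) actually hinges on the definition $L_m = W_m^T L W_m$ from (\ref{nlm}) (together with $W_m^T W_m = I_m$), rather than on (\ref{dec2}) itself, is a clarification the paper glosses over.
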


\begin{proof}
Similarly to Proposition \ref{p1}, (\ref{g1}) and (\ref{g2}) follows
immediately from the basic relation (\ref{dec2}).
\end{proof}

As before the proposition ensures that if the matrix $A$ has full rank, than
the Arnoldi algorithm allows to construct the GSVD of $(A,L)$. Step by step,
the quality of the approximation depends on the distance between $%
span\{Lw_{1},...,Lw_{m}\}$ and $\mathcal{K}_{m}(A,b)$. Similarly to (\ref{r3}%
) and (\ref{rr3}), since $\overline{v}_{k}=W_{m}v_{k}^{(m)}\in \mathcal{K}%
_{m}(A,b)$, we have%
\begin{equation*}
\left\Vert L\overline{x}_{k}-c_{k}^{(m)}\overline{v}_{k}\right\Vert \leq
\left\Vert (I-W_{m}W_{m}^{T})LW_{m}W_{m}^{T}\right\Vert .
\end{equation*}%
and%
\begin{equation*}
\left\Vert L\overline{x}_{k}-\sigma _{k}^{(m)}\overline{v}_{k}\right\Vert
\leq \left\Vert \left( W_{m}^{\bot }\right) ^{T}LW_{m}\right\Vert .
\end{equation*}

In Figure \ref{F3} we show the convergence of the singular values of $%
\overline{H}_{m}$, and the generalized singular values of the matrix pair $%
\left( \overline{H}_{m},L_{m}\right) $, with
\begin{equation*}
L=\left(
\begin{array}{cccc}
1 & -1 &  &  \\
& \ddots  & \ddots  &  \\
&  & 1 & -1 \\
0 & \cdots  & \cdots  & 0%
\end{array}%
\right) ,
\end{equation*}%
working with the test problems SHAW and BAART. The results show that the
approximations are quite accurate. It is interesting to observe that, in
both cases, after 8-9 iterations the algorithm starts to generate spurious
approximations. This is due to the loss of orthogonality of the Krylov
vectors, since in these experiments (and in what follows) we have used the
Gram-Schmidt implementation. Working with the Householder version of the
algorithm the problem is fixed. Anyway in the framework of the
regularization, a more accurate approximation of the smallest singular
values is useless because of the error in $b$.

\begin{figure}[H]
\centering
\includegraphics[width=0.45\textwidth]{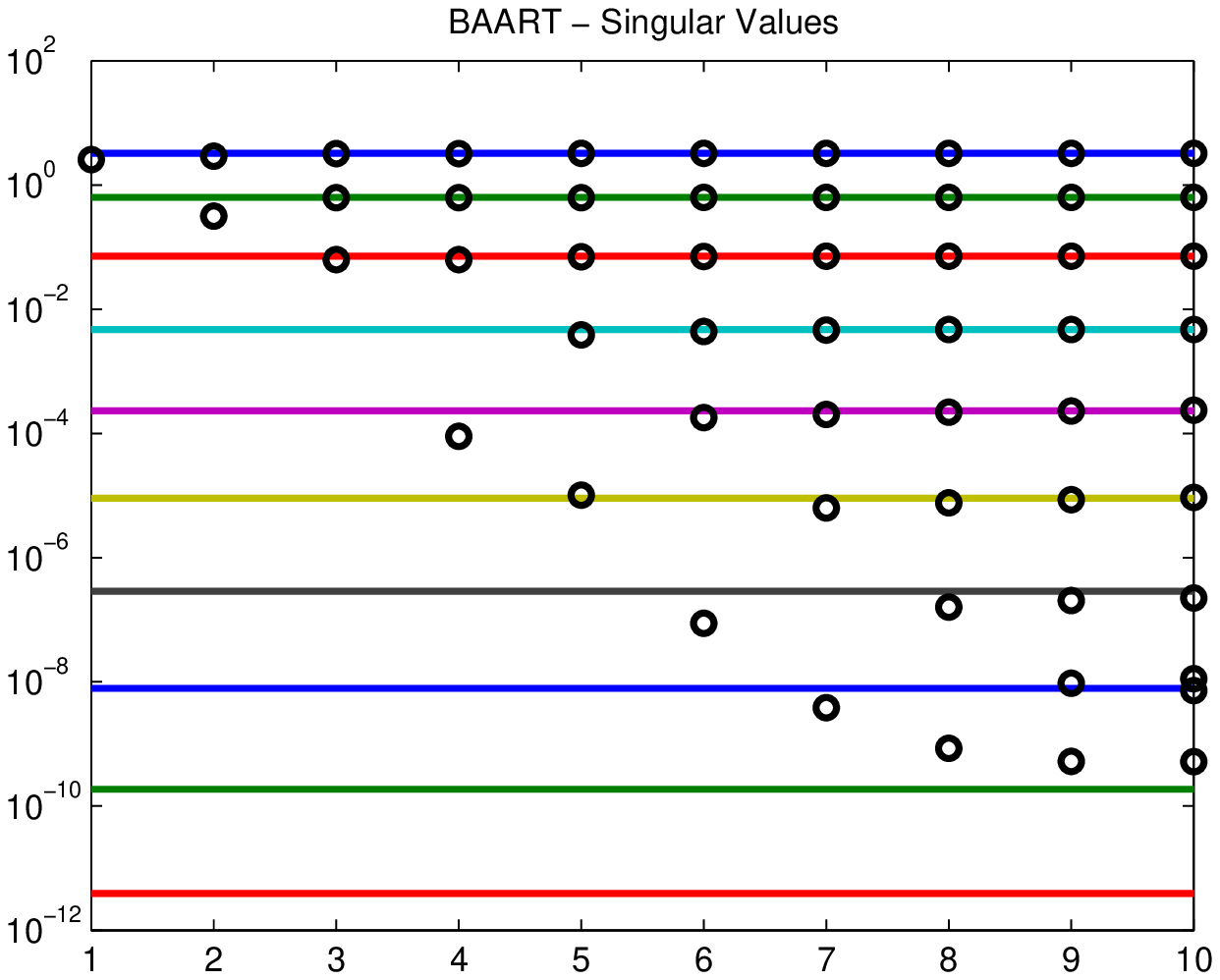} %
\includegraphics[width=0.45\textwidth]{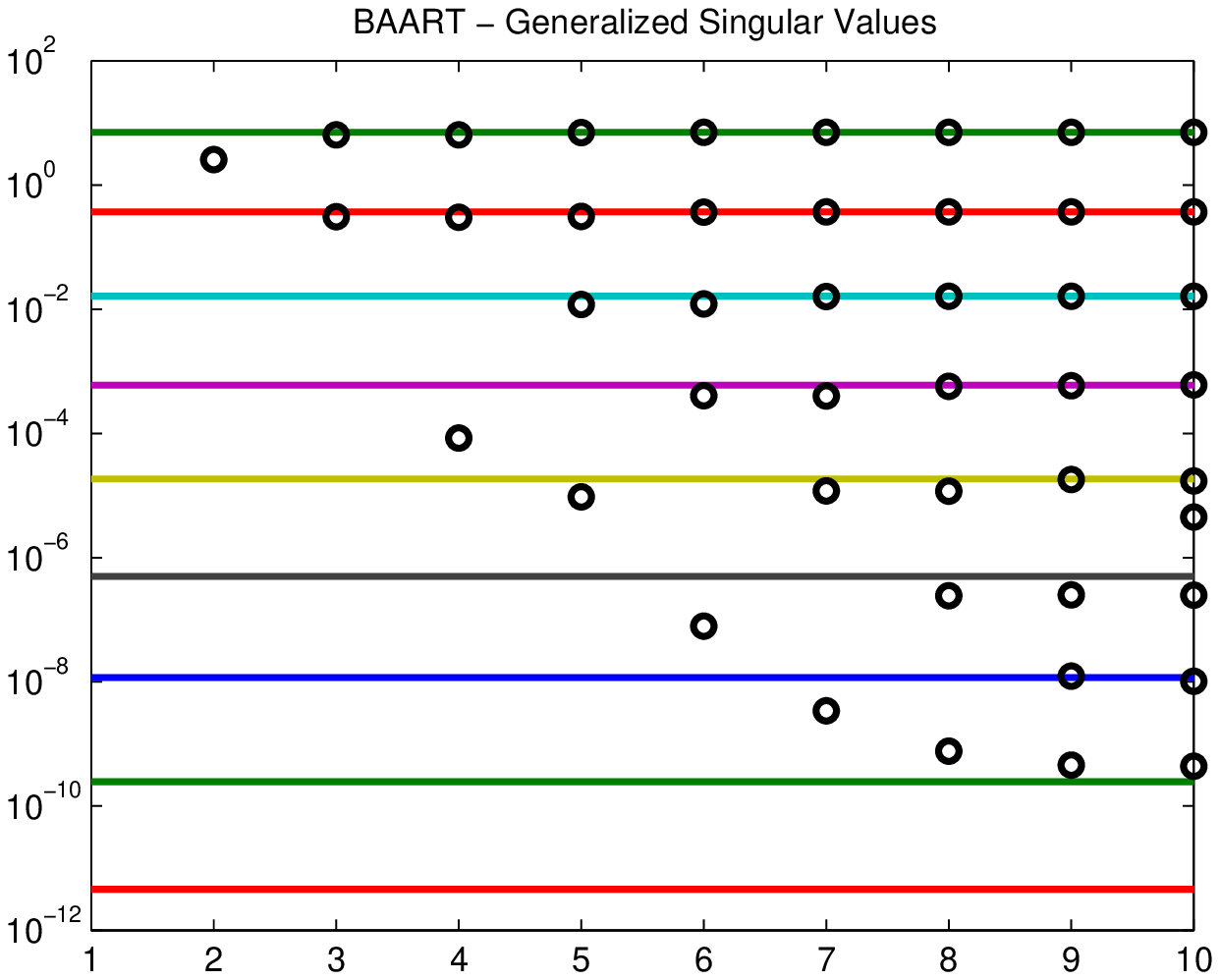} %
\includegraphics[width=0.45\textwidth]{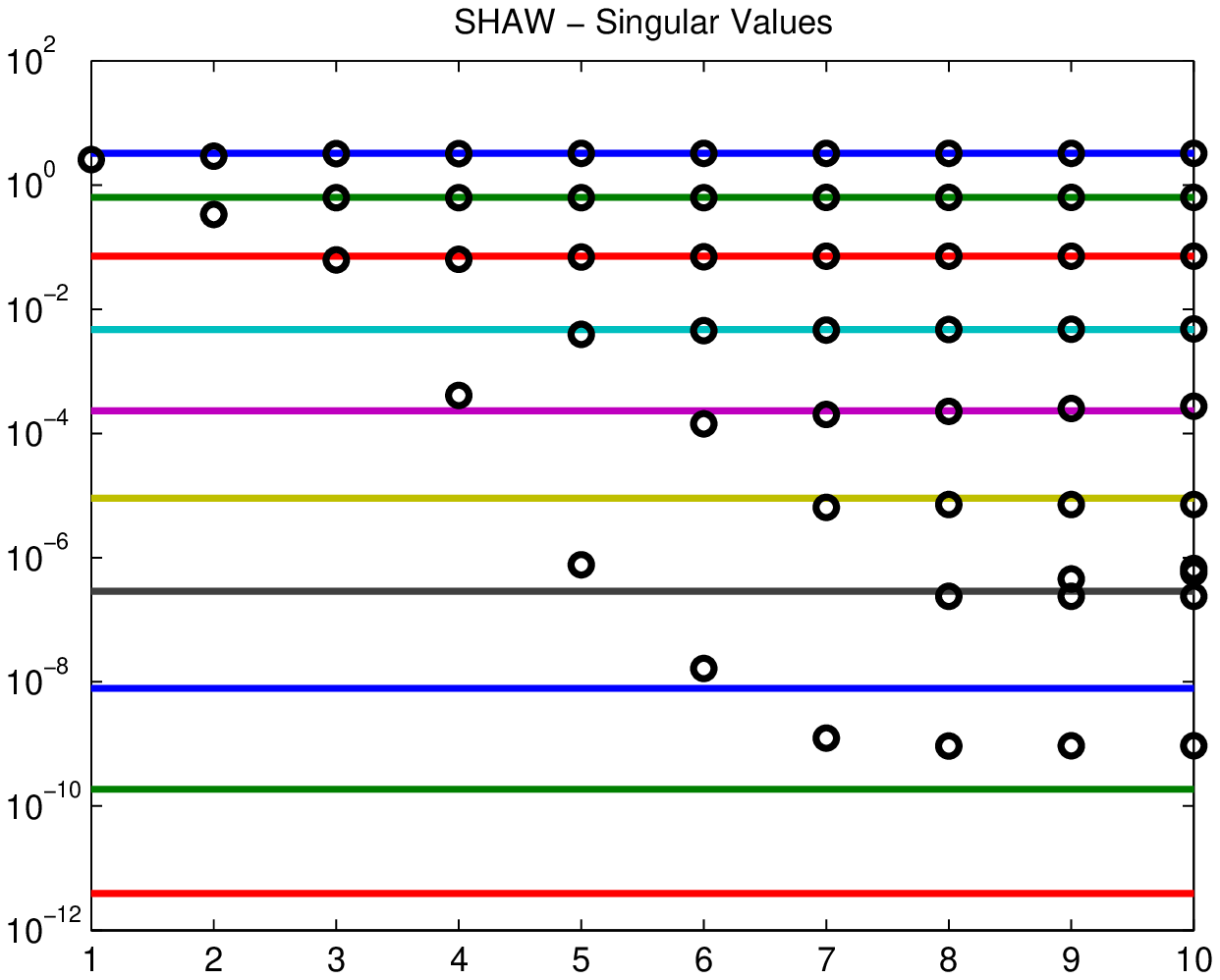} %
\includegraphics[width=0.45\textwidth]{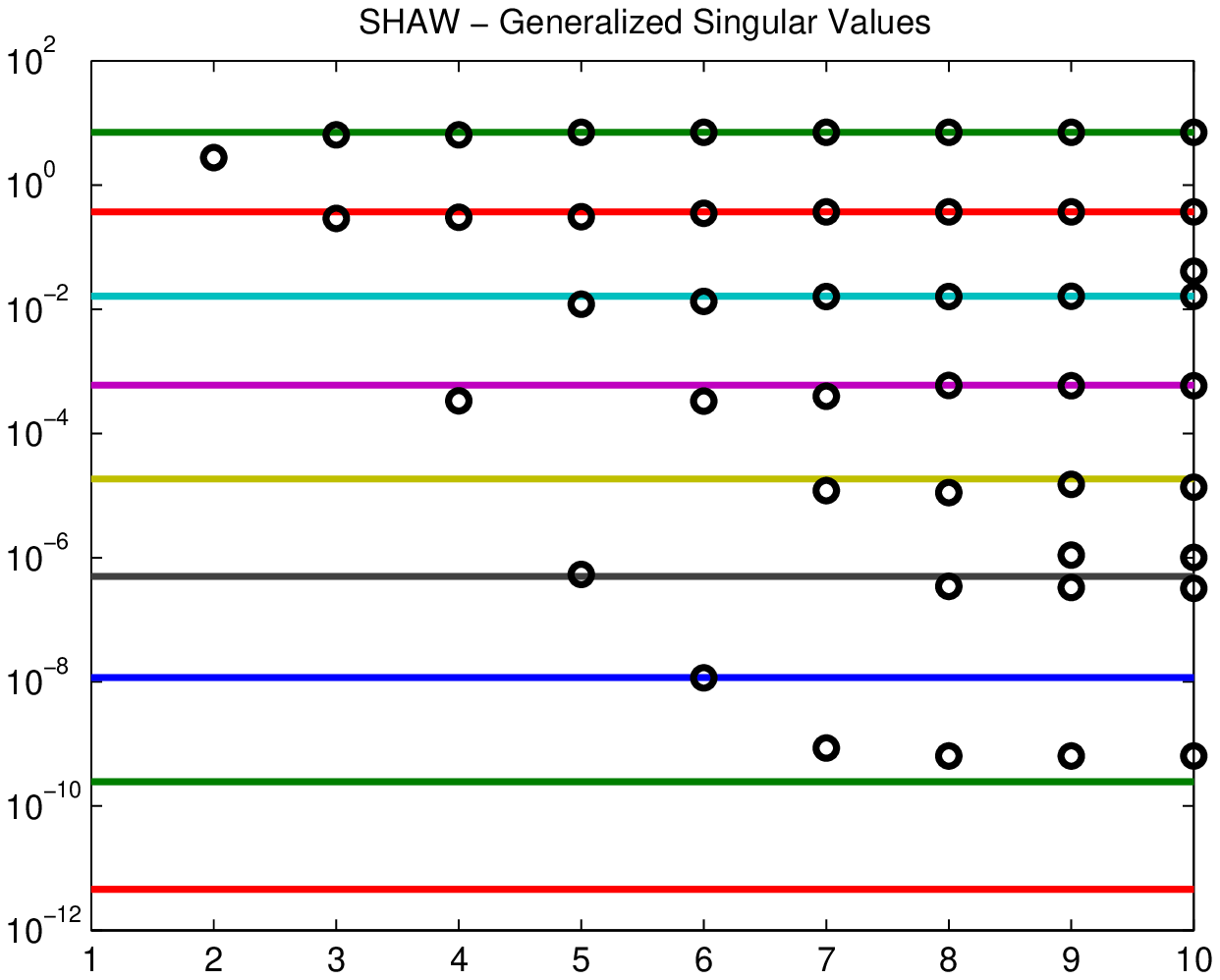}
\caption{\textit{Plots of the singular values (circle) of the matrix $%
\overline{H}_{m}$ (left) and the generalized singular values of the matrix
pair $(\overline{H}_{m},L_{m})$ (right) versus the iteration number k, for
the problem BAART and SHAW with $N=32 $. The solid lines represent the
singular values of the matrix A (left) and the generalized singular values
of the matrix pair $(A,L)$ (right).} }
\label{F3}
\end{figure}

\section{Generalized Cross-Validation}

A popular method for choosing the regularization parameter, which does non
require the knowledge of the noise properties nor its norm $\left\Vert
e\right\Vert $, is the Generalized Cross-Validation (GCV) \cite{Golub,Wab}.
The major idea of the GCV is that a good choice of $\lambda $ should predict
missing values, so that the model is not sensitive to the elimination of one
data point. This means that the regularized solution should predict a datum
fairly well, even if that datum is not used in the model. This viewpoint
leads to minimization with respect to $\lambda $ of the GCV function

\begin{equation*}
G(\lambda )=\frac{\Vert b-Ax_{\lambda }\Vert ^{2}}{[\text{\textrm{trace}}%
(I-AA_{\lambda })]^{2}},
\end{equation*}%
where $A_{\lambda }=(A^{T}A+\lambda ^{2}L^{T}L)^{-1}A^{T}$ is the matrix
that gives the regularized solutions of (\ref{tik}) from the normal equations%
\begin{equation*}
(A^{T}A+\lambda ^{2}L^{T}L)x_{\lambda }=A^{T}b.
\end{equation*}%
\

Using \ the GSVD of the matrix pair $(A,L)$, with a general $\ A\in \mathbb{R}%
^{M\times N},L\in \mathbb{R}^{P\times N}$, let $A=USX^{-1}$ and $L=VCX^{-1}$%
, where $S=diag(s_{1},...,s_{P})$ and $C=diag(c_{1},...,c_{P})$, the
generalized singular values $\gamma _{i}$ of $(A,L)$ are defined by the
ratios%
\begin{equation*}
\gamma _{i}=\frac{s_{i}}{c_{i}},\qquad i=1,...,P.
\end{equation*}%
Therefore, one can show that the expression of the GCV function is given by%
\begin{equation}
G(\lambda )=\frac{\sum_{i=1}^{N}\left( \frac{\lambda ^{2}}{\gamma
_{i}^{2}+\lambda ^{2}}u_{i}^{T}b\right) ^{2}}{\left( M-(N-P)-\sum_{i=1}^{P}%
\frac{\gamma _{i}^{2}}{\gamma _{i}^{2}+\lambda ^{2}}\right) ^{2}}.
\label{gcv1}
\end{equation}%
For the square case $M=N$, and $P\leq N$, rearranging the sum at the
denominator we obtain%
\begin{equation}
G(\lambda )=\frac{\sum_{i=1}^{N}\left( \frac{\lambda ^{2}}{\gamma
_{i}^{2}+\lambda ^{2}}u_{i}^{T}b\right) ^{2}}{\left( \sum_{i=1}^{P}\frac{%
\lambda ^{2}}{\gamma _{i}^{2}+\lambda ^{2}}\right) ^{2}}.  \label{gcv2}
\end{equation}%
The GCV criterion is then based on the choice of $\lambda $ which minimizes $%
G(\lambda )$. It is well known that this minimization problem is generally
ill-conditioned, since the function $G(\lambda )$ is typically flat in a
relatively wide region around the minimum. As a consequence, this criterion
may even lead to a poor regularization \cite{Kon,Luk,Thom}.

As already said in the Introduction, for large-scale problems the GCV
approach for (\ref{tik}) is too much expensive since it requires the SVD
(GSVD). In this setting, our idea is to fully exploit the approximation
properties of the Arnoldi algorithm investigated in Section 3 and 4. In
particular, our aim is to define a sequence of regularization parameters $%
\left\{ \lambda _{m}\right\} $, i.e., one for each iteration of the Arnoldi
algorithm, obtained by the minimization of the following GCV function
approximations%
\begin{equation}
G_{m}(\lambda )=\frac{\left\Vert \overline{H}_{m}y_{m,\lambda }-\left\Vert
b\right\Vert e_{1}\right\Vert ^{2}}{\left( N-m+\sum_{i=1}^{m}\frac{\lambda
^{2}}{\gamma _{i}^{(m)2}+\lambda ^{2}}\right) ^{2}}, \label{gcvm}
\end{equation}%
where $y_{m,\lambda }$ solves the reduced minimization (\ref{rm}), and $\gamma
_{i}^{(m)}$, $i=1,...,m$, are the approximations of the generalized singular
values, obtained with the Arnoldi process. Note that
\begin{equation*}
\left\Vert \overline{H}_{m}y_{m,\lambda }-\left\Vert b\right\Vert
e_{1}\right\Vert ^{2}=\sum_{i=1}^{m}\left( \frac{\lambda ^{2}}{\gamma
_{i}^{(m)2}+\lambda ^{2}}u_{i}^{(m)^{T}}c\right) ^{2}+\left(
u_{m+1}^{(m)^{T}}c\right) ^{2},
\end{equation*}%
where $u_{i}^{(m)}$ is defined as in Proposition \ref{p2} and $c=\left\Vert
b\right\Vert e_{1}$, so that the construction of $G_{m}(\lambda )$ can be
obtained working in reduced dimension. The basic idea which leads to the
approximation $G_{m}(\lambda )\approx G(\lambda )$, is to set equal to $0$
the generalized singular values that are not\ approximated by the Arnoldi
algorithm, and that are expected to be close to $0$ after few iterations.\
This is justified by the analysis and the experiments of Section 3 and 4.

We remark that in a hybrid approach \cite{KO}, one aims to regularize the
projected problem
\begin{equation}
\min_{y\in \mathbb{R}^{m}}\left\{ \left\Vert \overline{H}_{m}y-\left\Vert
b\right\Vert e_{1}\right\Vert \right\} .  \label{rpm}
\end{equation}%
Since no geometrical information on the solution of (\ref{rpm}) can be
inherited from the solution of the original problem, the choice of $%
L_{m}=I_{m}$ as regularization operator is somehow forced (this is a
standard strategy for hybrid methods \cite[\S 6.7]{PCH}). In this framework,
if the GCV criterion is used to regularize (\ref{rpm}), the basic
difference with respect to (\ref{gcvm}) is at the denominator, where $N-m$
is replaced by $m$. We observe moreover that (\ref{gcvm}) is similar to the
GCV approximation commonly used for iterative methods, in which the
denominator is simply $N-m$ \cite[\S 7.4]{PCH}.

In the following, the algorithm that has been used for the tests of the
next sections.

\begin{center}
\hrulefill \\[0pt]
\vspace{-0.1cm} \textbf{AT - GCV Algorithm } \\[0pt]
\vspace{-0.2cm} \hrulefill \\[0pt]
\begin{tabbing}
NN \= XX \= XX \= XX \= XX \= \kill
{\bf given} $A\in {\mathbb{R}}^{N\times N},\;b\in {\mathbb{R}}^{N}$, $\delta$\\
{\bf while} $\big| \left\Vert r_m \right\Vert- \left\Vert r_{m-1}\right\Vert \big| /\left\Vert r_m \right\Vert  \geq \delta$\\
\> {update} $\overline{H}_{m}$ and $L_{m}$ from (\ref{hm}) and (\ref{nlm})\\
\> {compute} GSVD($\overline{H}_m,L_m $)\\
\> {compute} $\lambda_m=\arg \min_{\lambda} G_{m}(\lambda $)\\
\> {solve}
$
\min_{y_{m}\in \mathbb{R}^{m}}\left\Vert \left(\begin{array}{c}
\overline{H}_{m} \\
\lambda_m L_{m}
\end{array}
\right) y_{m}-\left(
\begin{array}{c}
\left\Vert b\right\Vert e_{1} \\
0 \end{array}
\right) \right\Vert ^{2}$\\
\>{compute} the corresponding residual $r_{m}$ \\
{\bf end}\\
{compute} $x_{m}=W_{m}y_{m}$
\end{tabbing}
\vspace{-0.5cm} 
\hrulefill
\end{center}


The stopping rule used in the algorithm is just based on the residual. As an alternative, one may even employ the strategy adopted in \cite{Nagy}, based on the observation of the GCV approximations.

\section{Numerical results}

In order to test the performance of the proposed method, we consider again
some classical test problems taken from the Regularization Tools \cite{H1}.
In particular in Figures \ref{F4}-\ref{F5}, we consider the problems BAART,
SHAW, FOXGOOD, I\_LAPLACE, with right-hand side affected by 0.1\% or 1\%
Gaussian noise. The regularization operator is always the discretized first
derivative, augmented with a zero row at the bottom to make it square (cf. (%
\ref{nlm})). For each experiment we show: (a) the approximation of $%
G(\lambda )$ obtained with the functions $G_{m}(\lambda )$ for some values
of $m$, with a graphical comparison of the local minima; (b) the approximate
solution; (c) the relative residual and error history; (d) the sequence of
selected parameters $\left\{ \lambda _{m}\right\} $, with respect to the one
obtained with the minimization of $G(\lambda )$ (denoted by $\lambda_A$ in the pictures)
and the optimal one ($\lambda_{opt}$) obtained
by the minimization of the distance between the regularized and the true
solution \cite{OL1}
\begin{equation*}
\min_{\lambda } \left\Vert x_{reg}-x_{true}\right\Vert
^{2}\equiv \min_{\lambda }f(\lambda ),
\end{equation*}%
where

\begin{equation*}
f(\lambda )=\left\{ \sum_{i=1}^{p}\left( \frac{\lambda ^{2}}{(\gamma
_{i}^{2}+\lambda ^{2})}\frac{u_{i}^{T}b}{\sigma _{i}}x_{i}-%
\sum_{i=p+1}^{N}(u_{i}^{T}b)x_{i}\right) -\sum_{i=1}^{N}\frac{u_{i}^{T}b}{%
\sigma _{i}}v_{i}\right\} ^{2}.
\end{equation*}

\begin{figure}[H]
\centering
\includegraphics[width=0.49\textwidth]{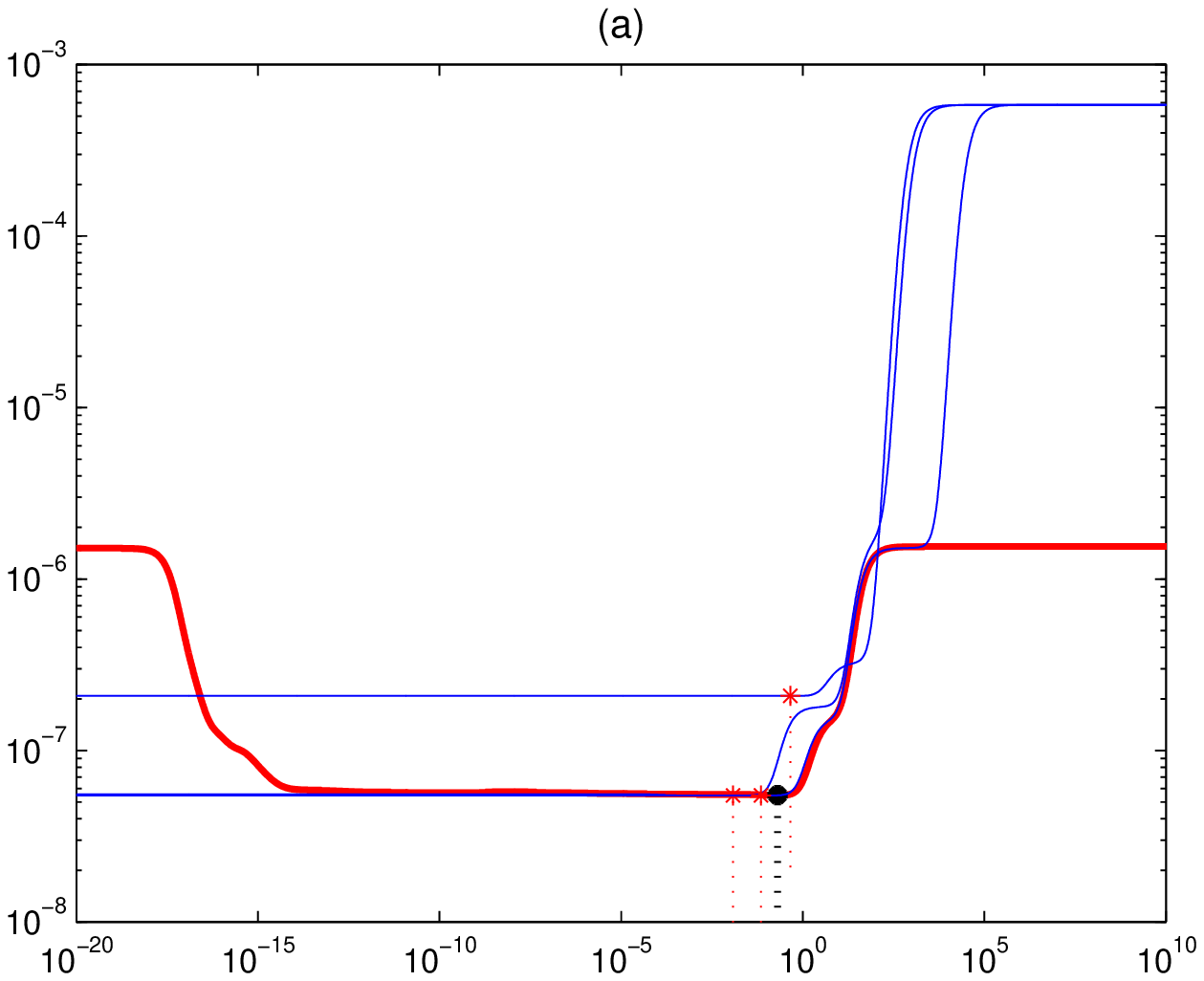} %
\includegraphics[width=0.49\textwidth]{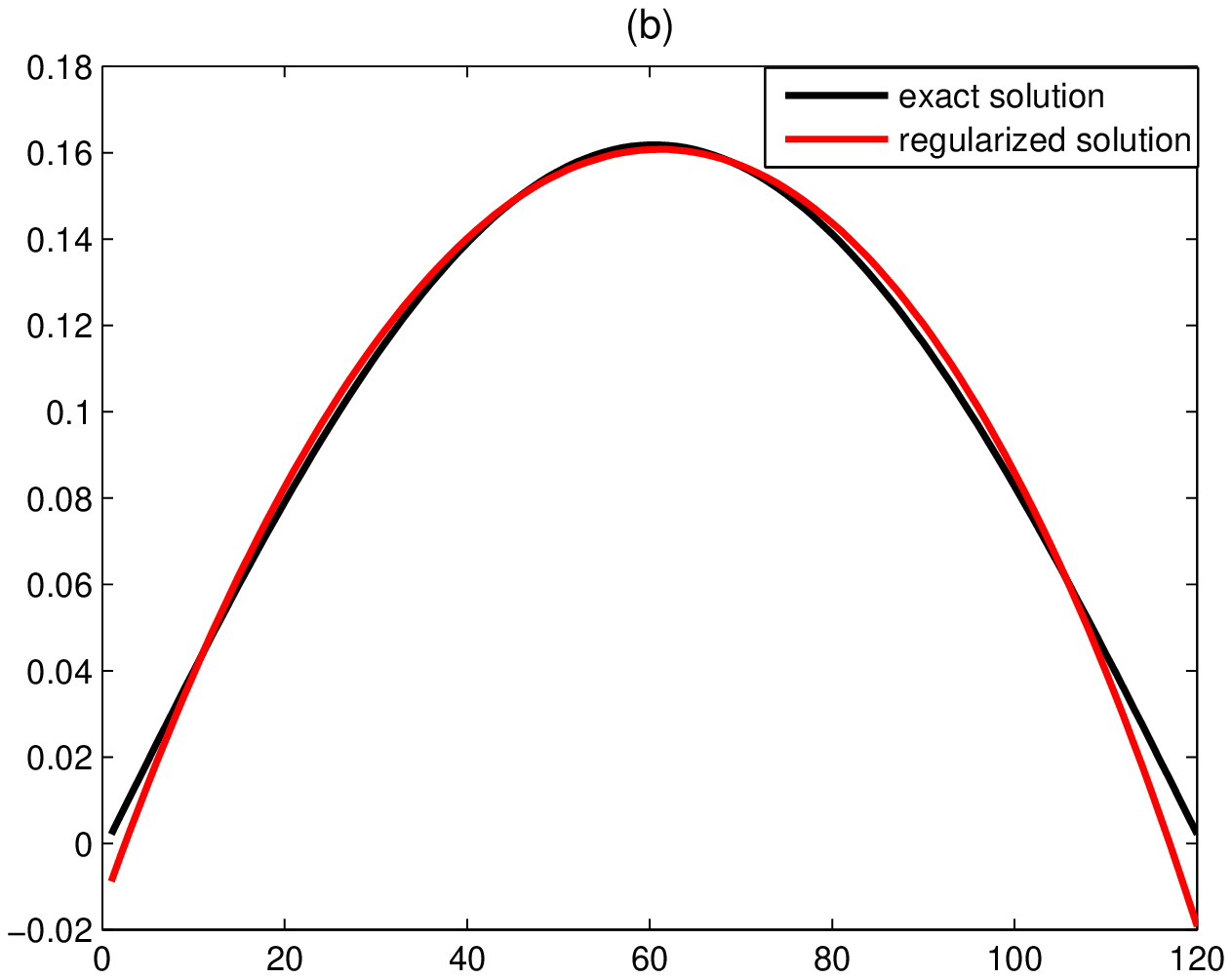} \newline
\includegraphics[width=0.49\textwidth]{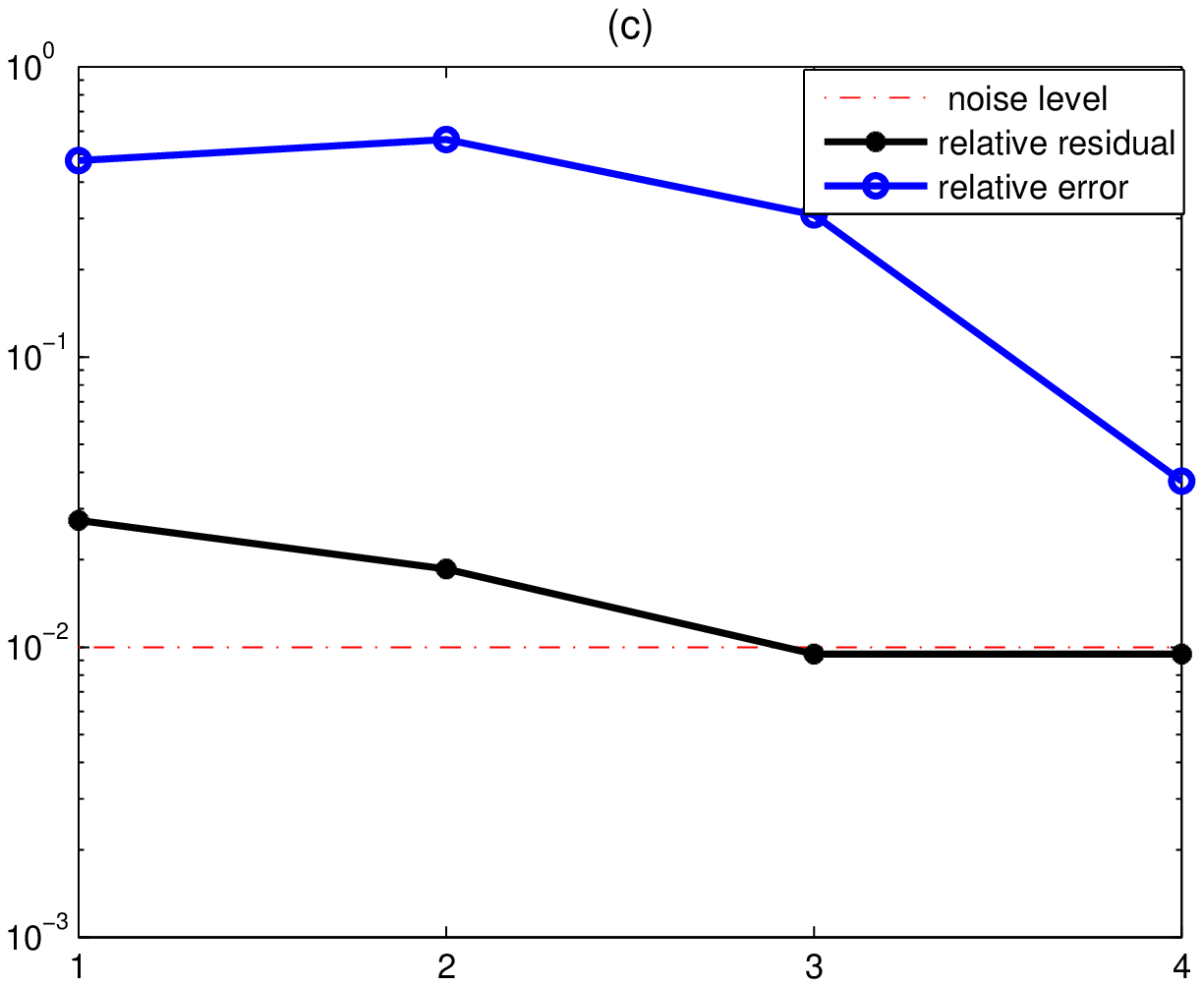} %
\includegraphics[width=0.49\textwidth]{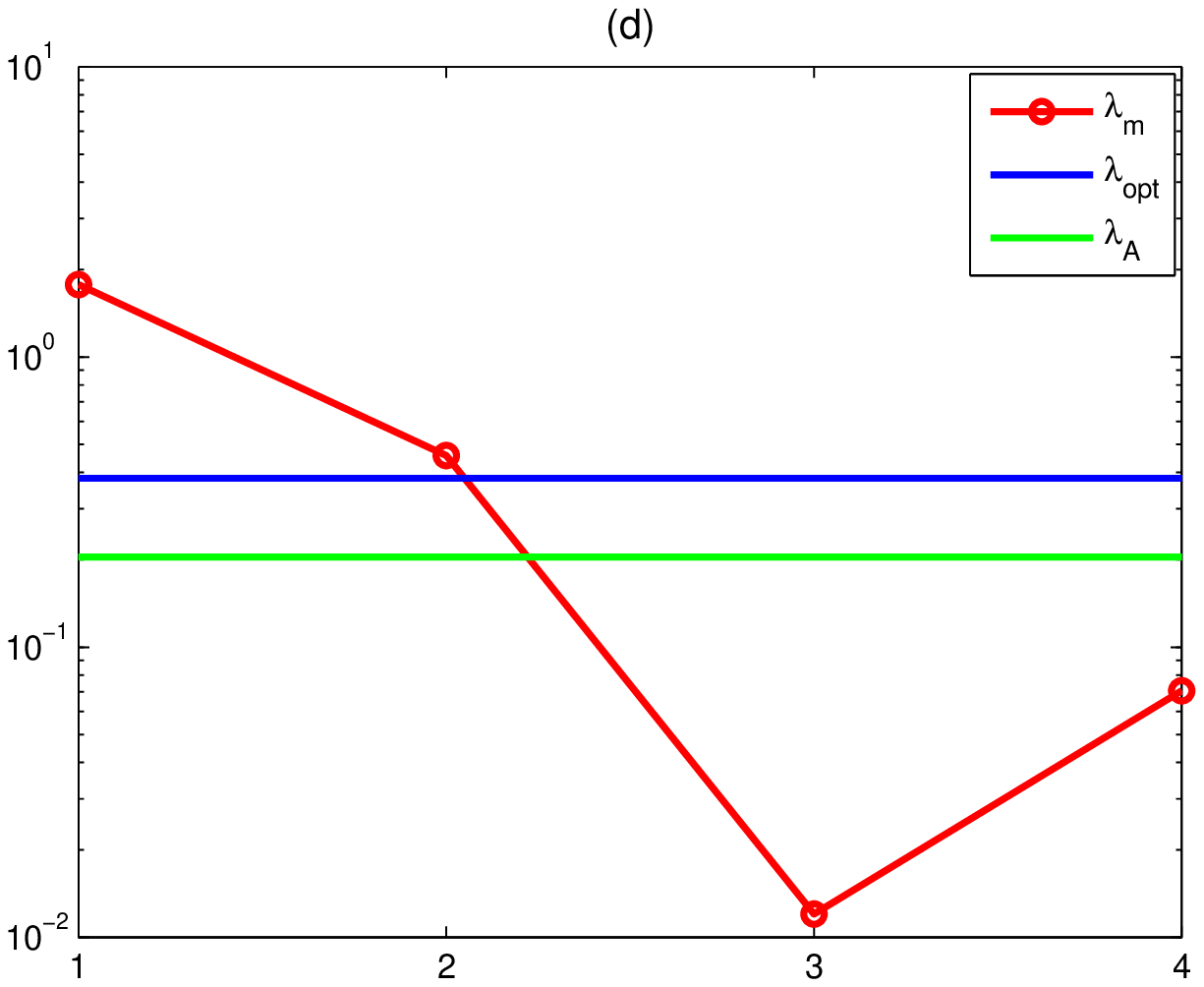}
\includegraphics[width=0.49\textwidth]{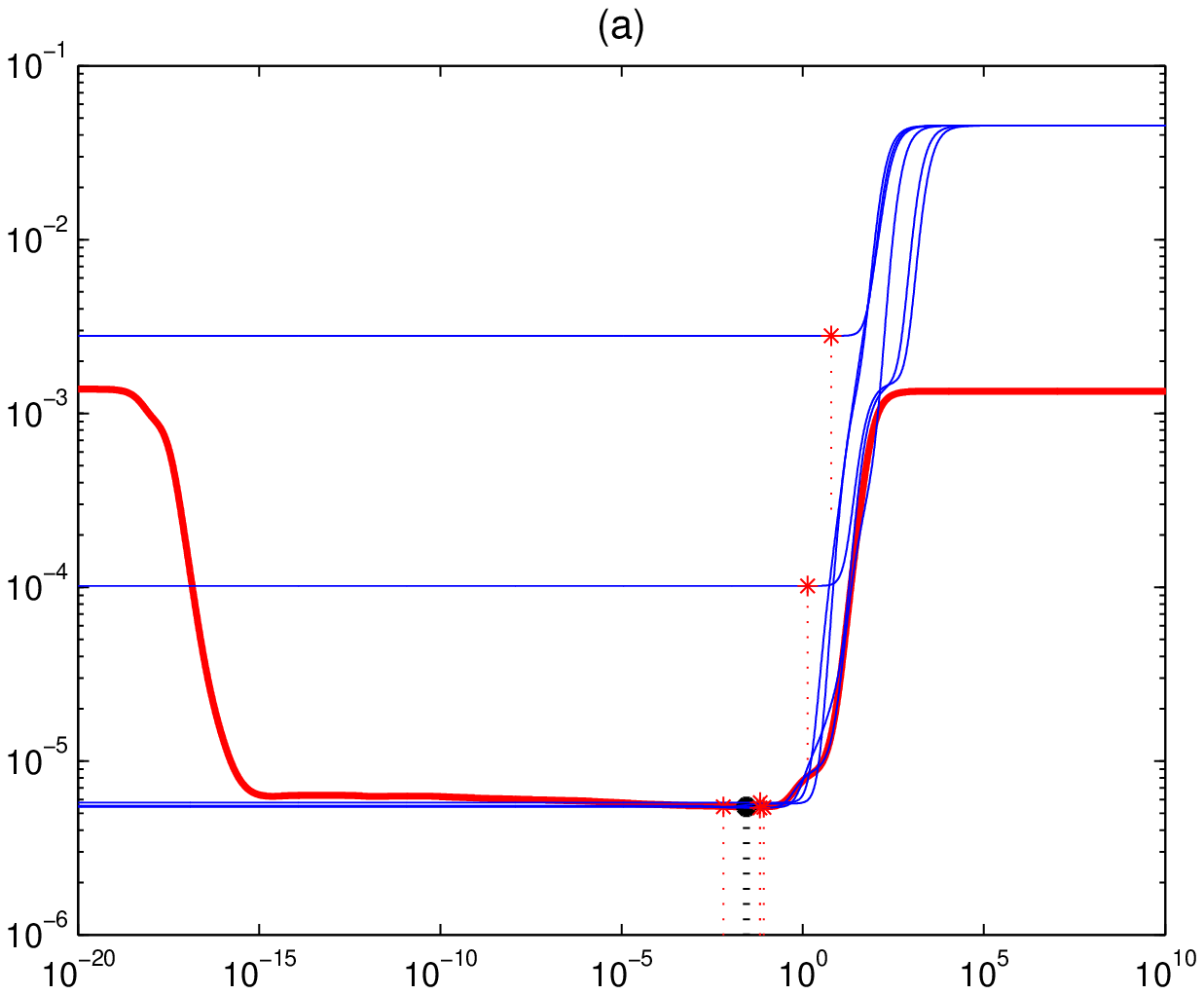} %
\includegraphics[width=0.49\textwidth]{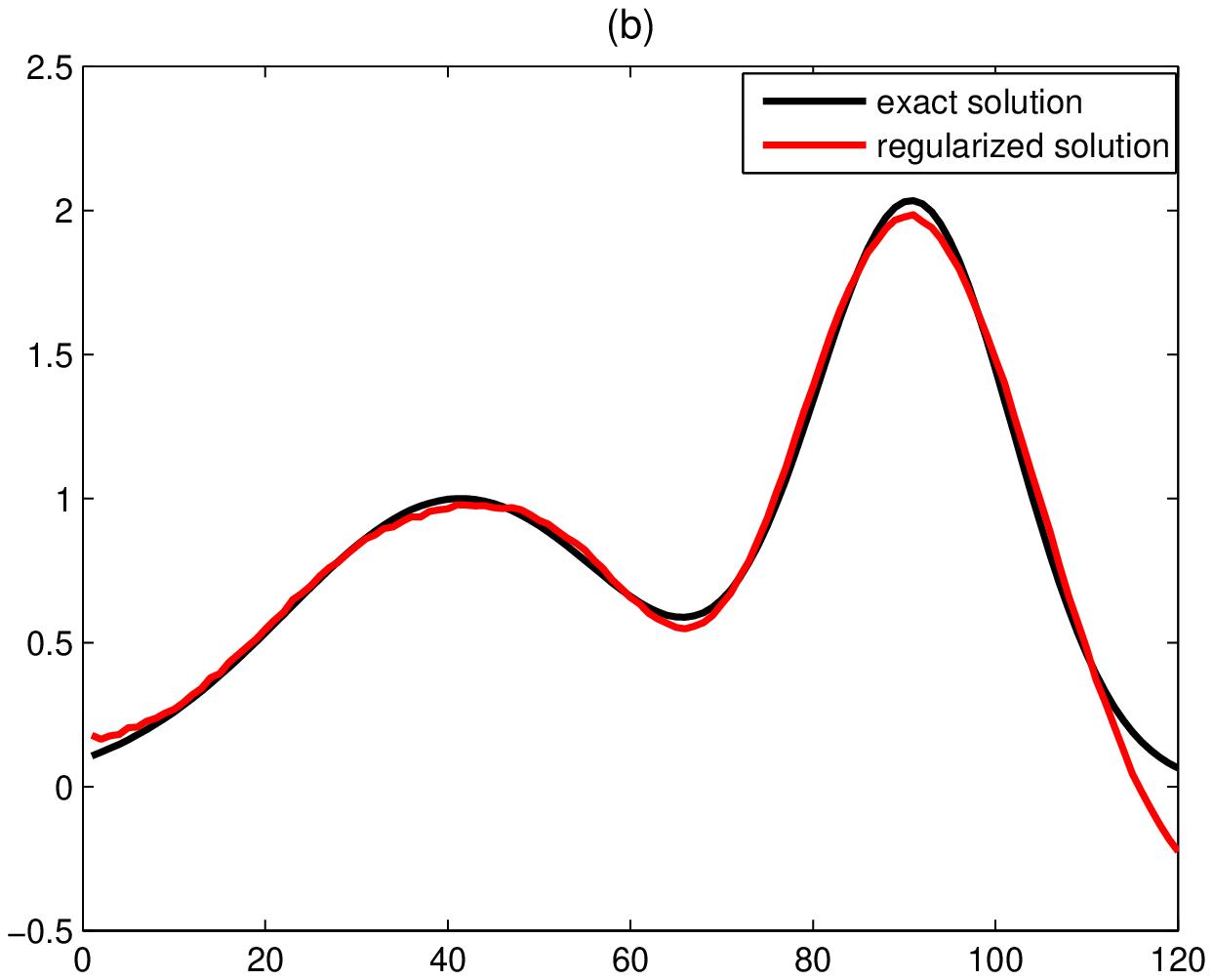} \newline
\includegraphics[width=0.49\textwidth]{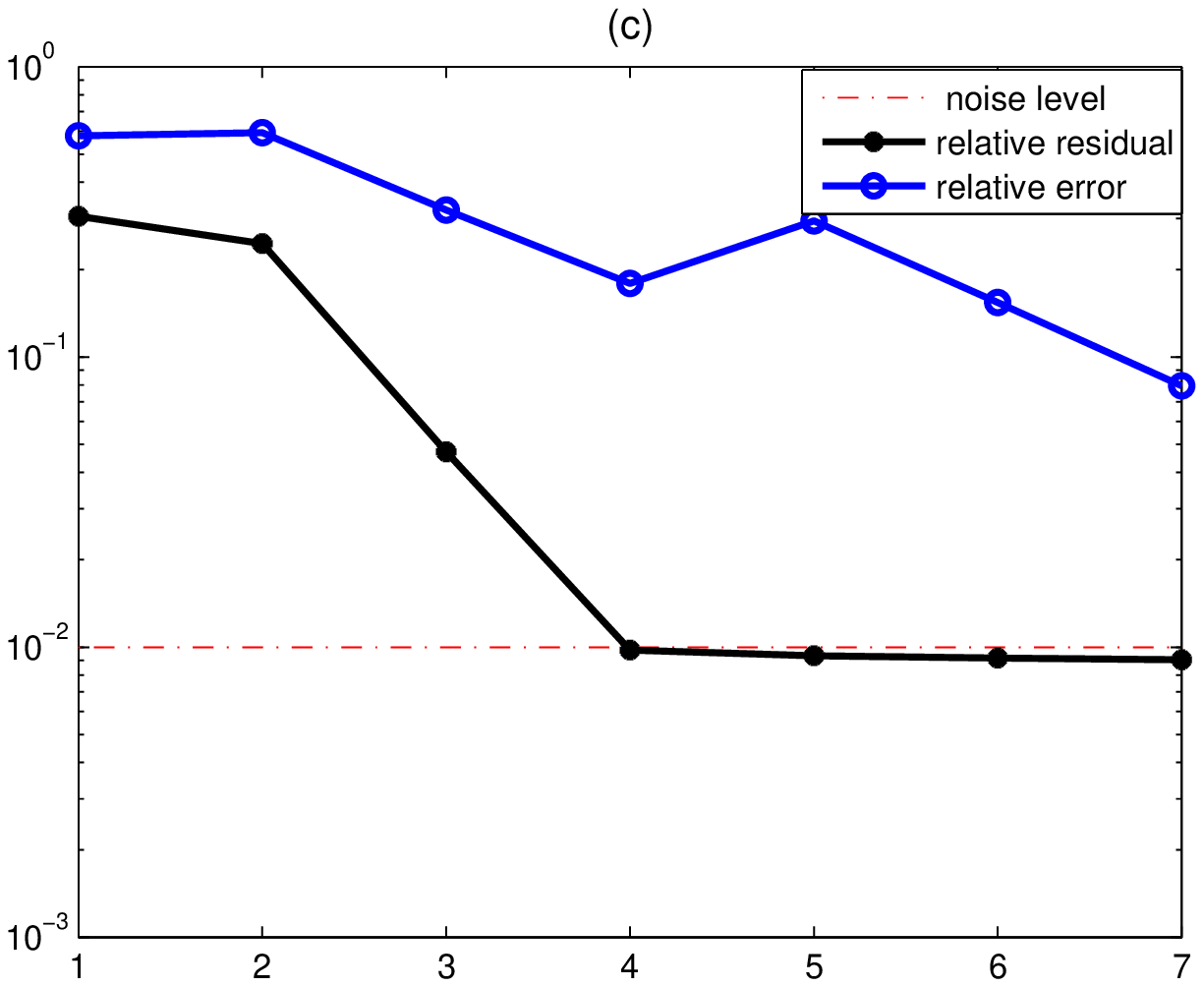} %
\includegraphics[width=0.49\textwidth]{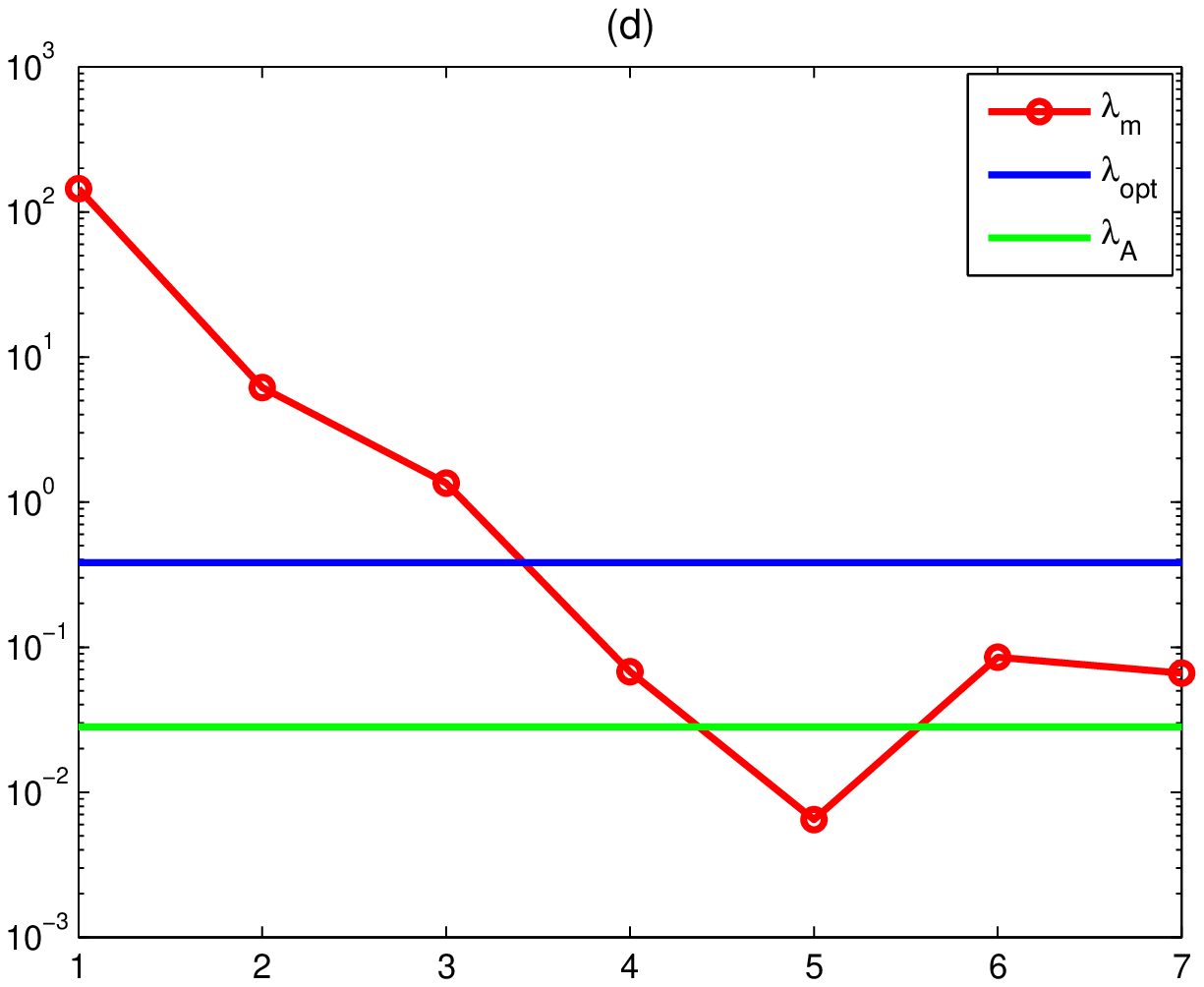}
\caption{\textit{Results for BAART (top) and SHAW (bottom). The dimension of each problem is $N=120$. Noise level $\varepsilon =10^{-2}.$}}
\label{F4}
\end{figure}

\begin{figure}[H]
\centering
\includegraphics[width=0.49\textwidth]{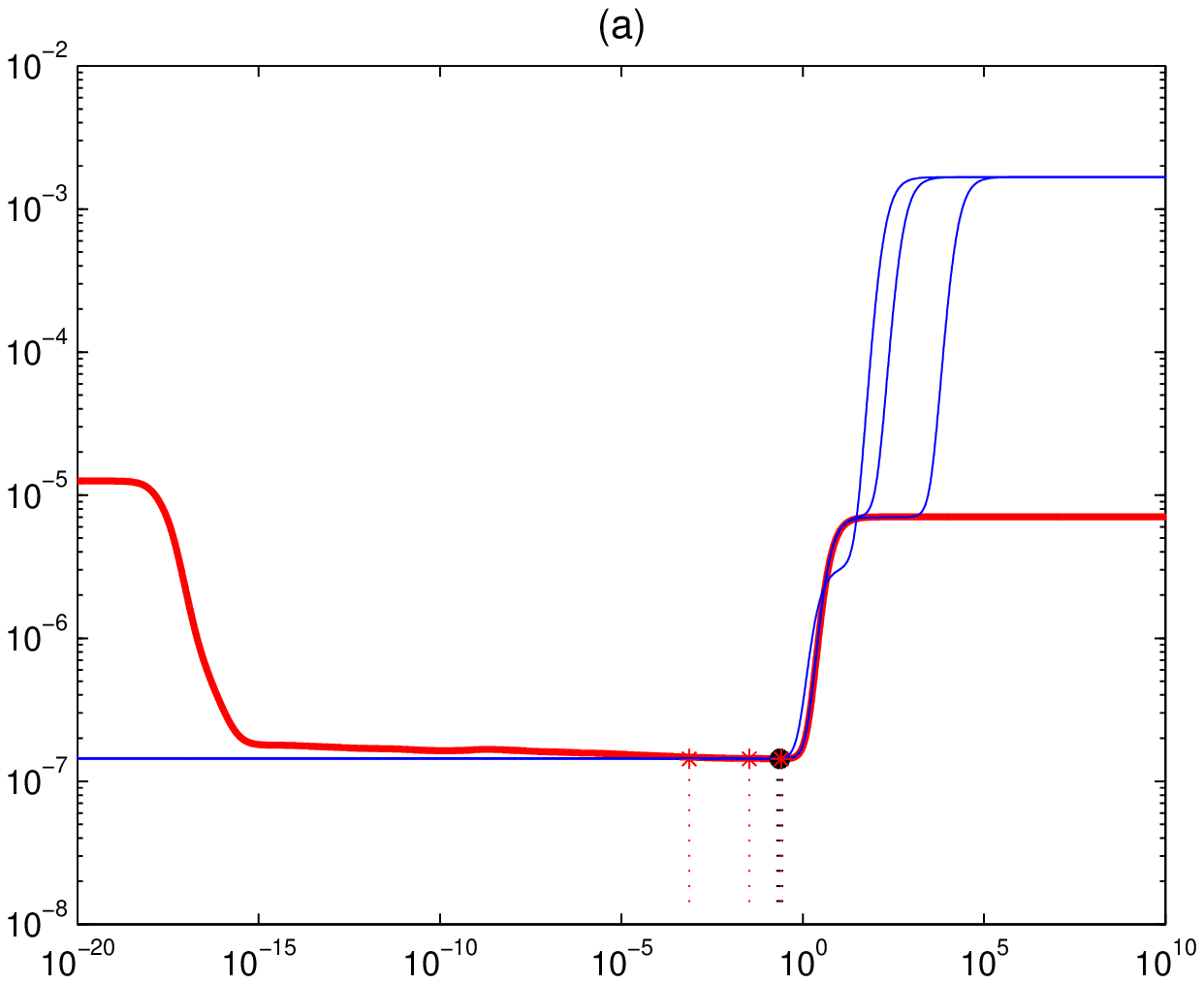} %
\includegraphics[width=0.49\textwidth]{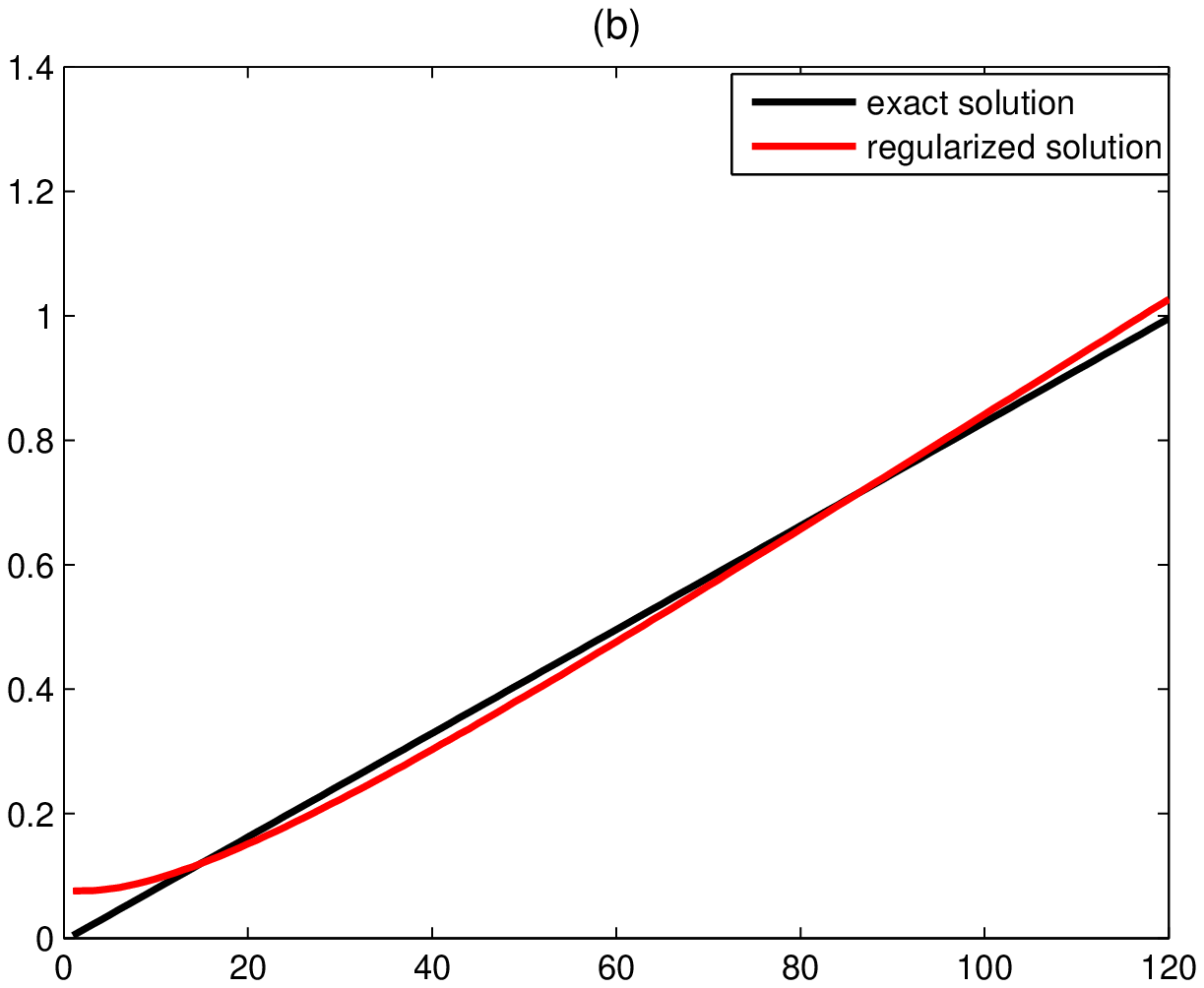} \newline
\includegraphics[width=0.49\textwidth]{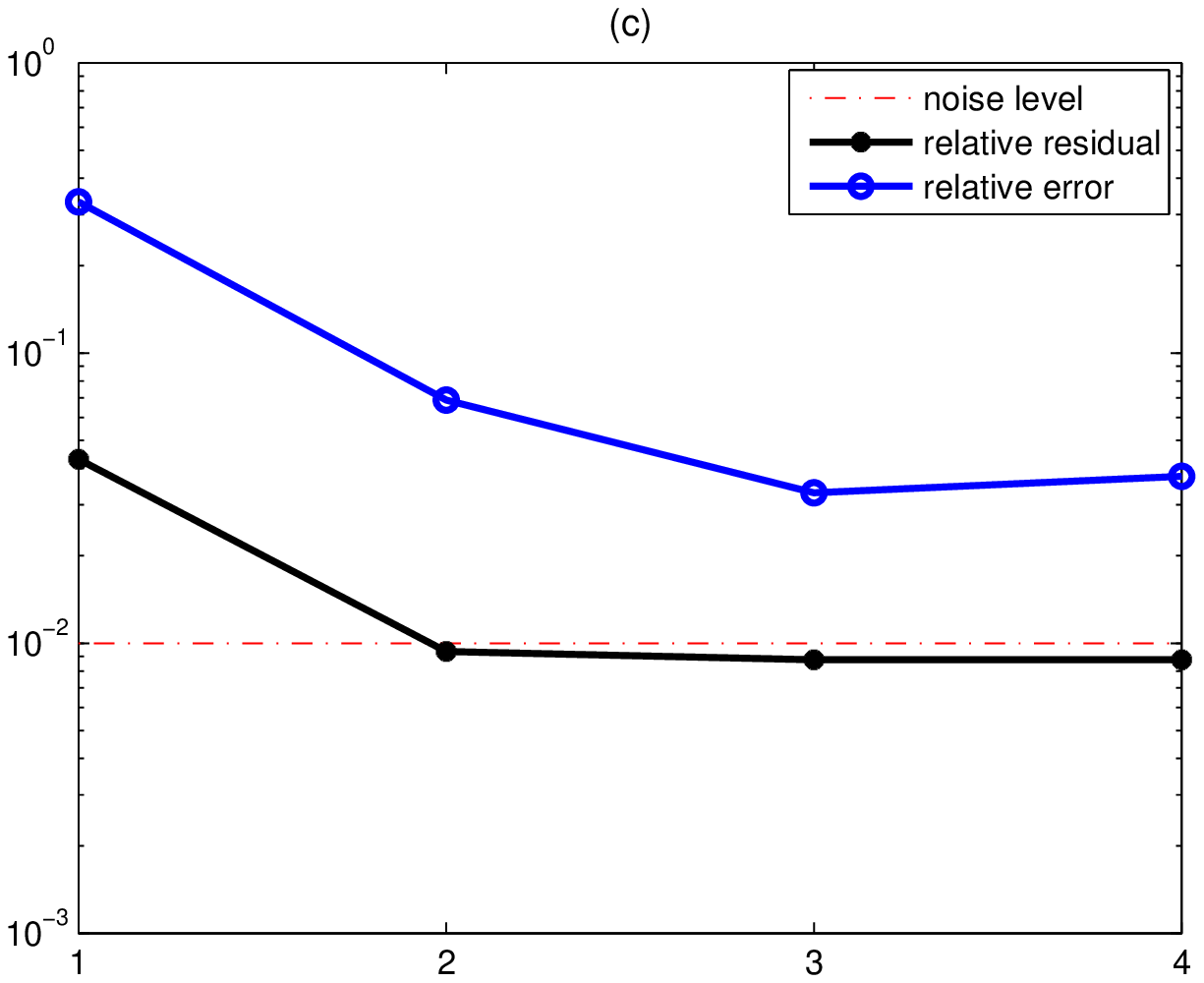} %
\includegraphics[width=0.49\textwidth]{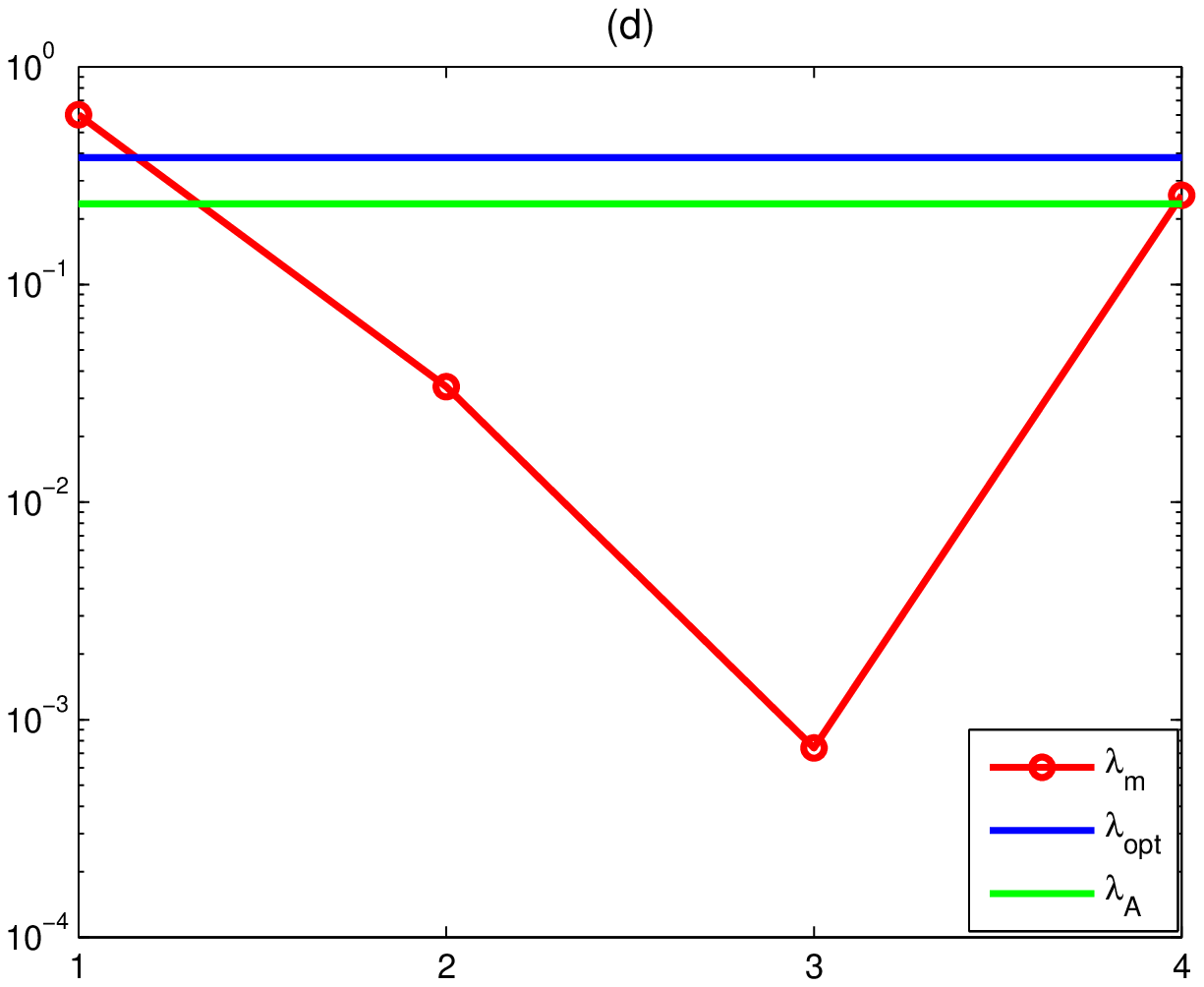} \newline
\par
\includegraphics[width=0.49\textwidth]{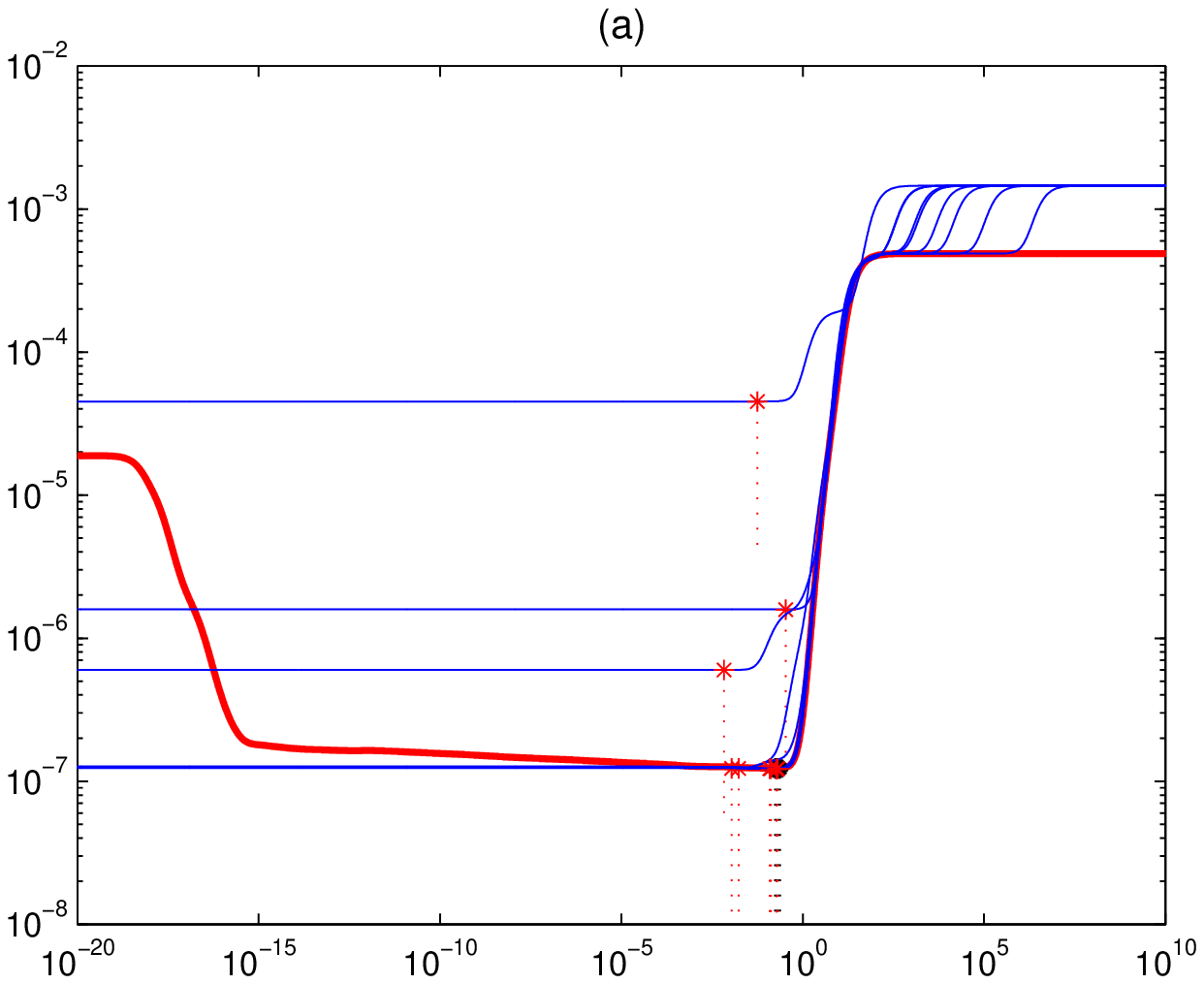} %
\includegraphics[width=0.49\textwidth]{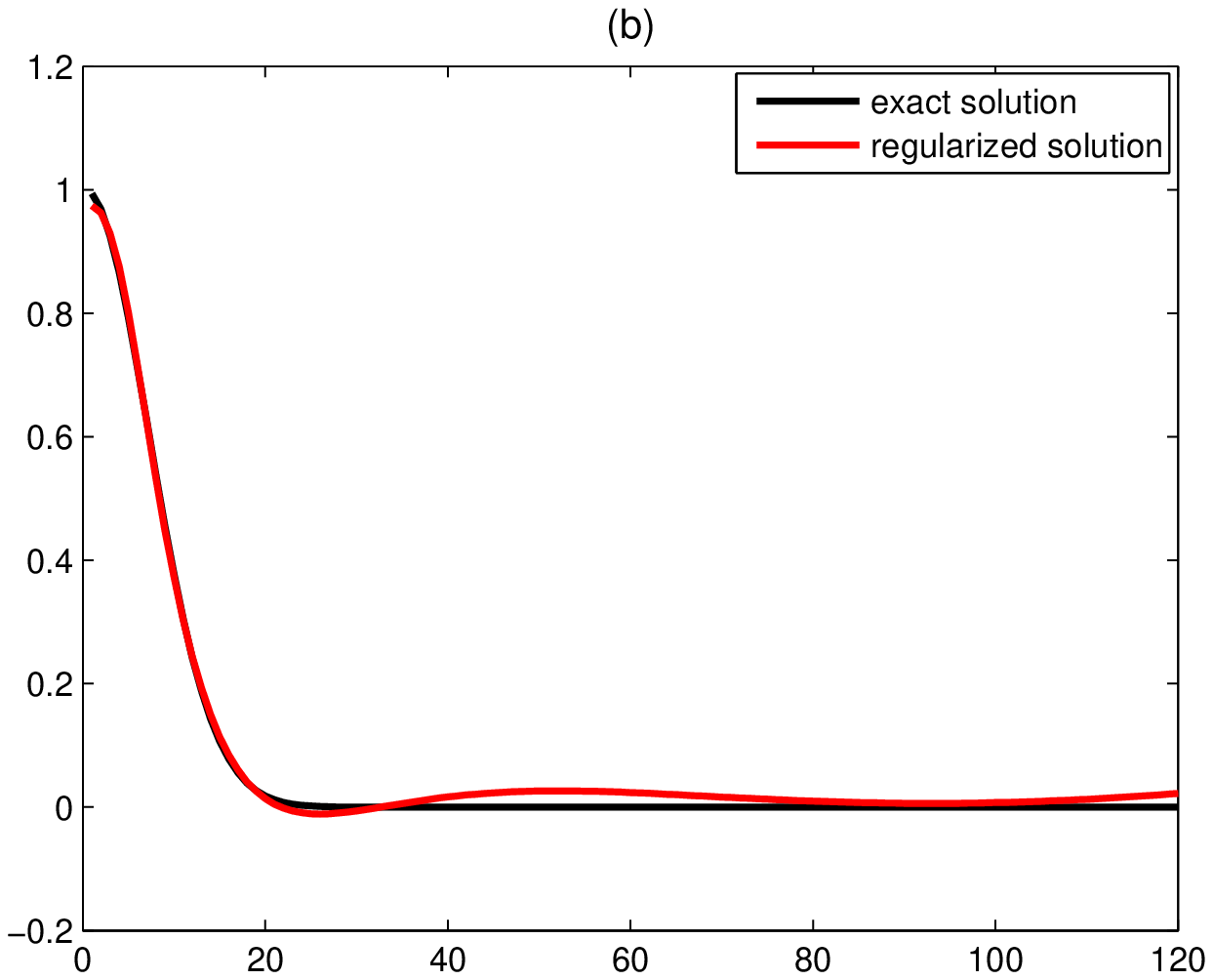} \newline
\includegraphics[width=0.49\textwidth]{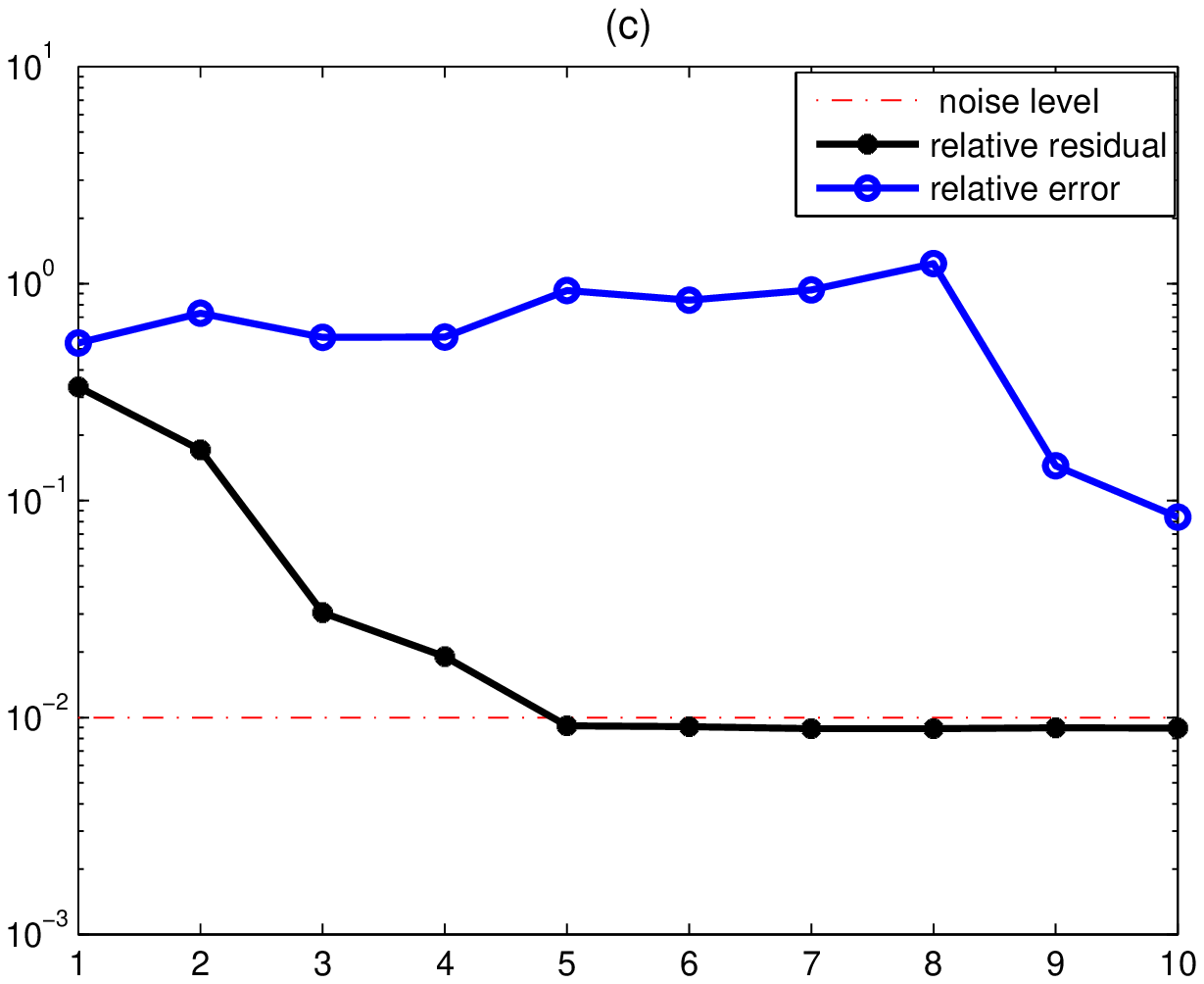} %
\includegraphics[width=0.49\textwidth]{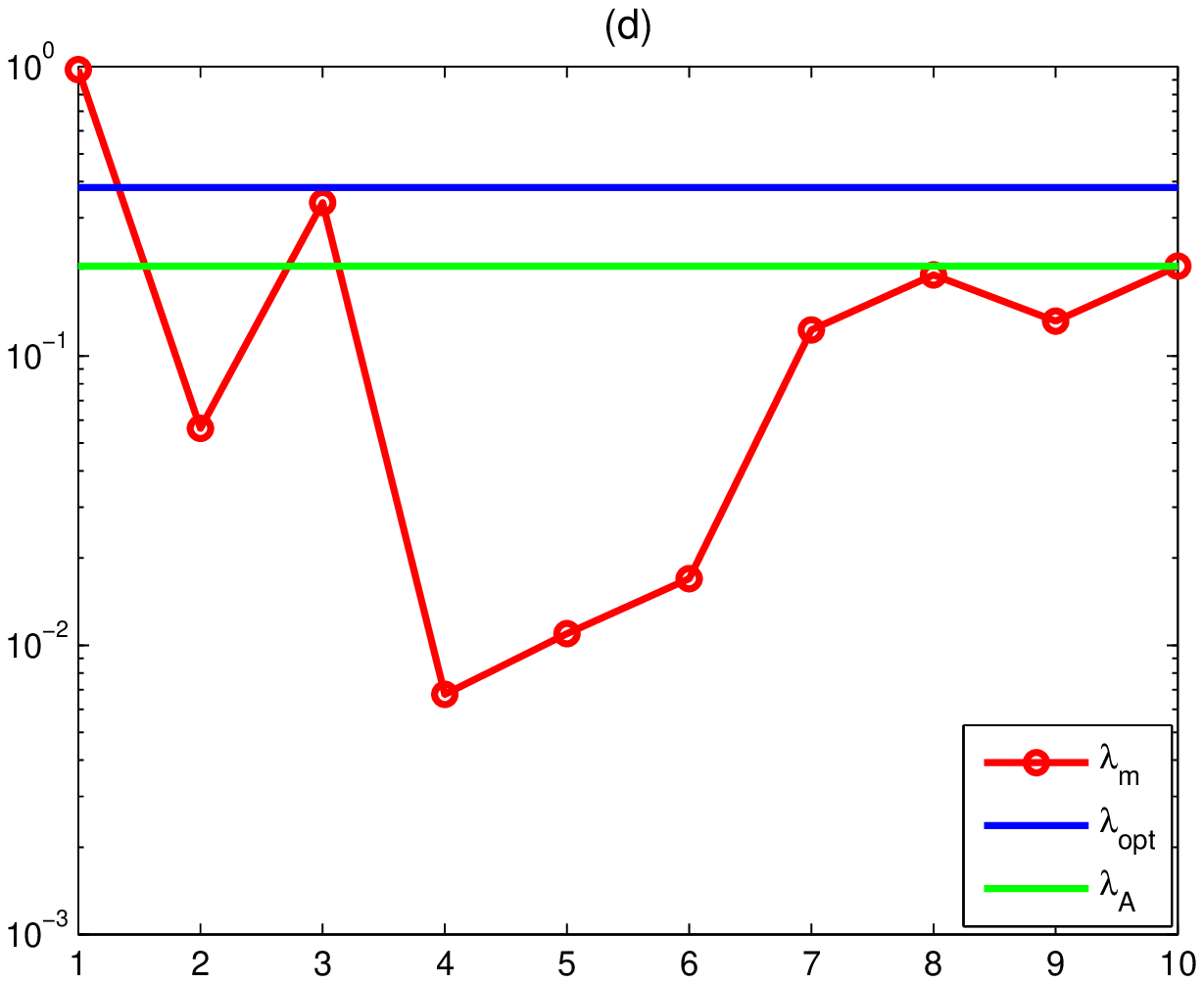} \newline
\caption{\textit{Results for FOXGOOD (top) and I\_LAPLACE (bottom). The dimension of each problem is $N=120$. Noise level $\varepsilon =10^{-3}.$}}
\label{F5}
\end{figure}

\section{An example of image restoration}

We conclude with an illustration of the performance of the GCV-Arnoldi
approach on a 2D image deblurring problem which consist in recovering the
original $n\times n$ image from a blurred and noisy observed image.

Let $X$ \ be a $n\times n$ two dimensional image. The vector $\overline{x}$
of dimension $N=n^{2}$ obtained by stacking the columns of the image $X$
represents a blur-free and noise-free image. We generate an associated
blurred and noise-free image $\overline{b}$ by multiplying $\overline{x}$ by
a block Toeplitz matrix $A\in \mathbb{R}^{N\times N}$ with Toeplitz blocks,
implemented in the function \texttt{blur.m} from the Regularization Tools
\cite{H1}. This Matlab function has two parameters, \texttt{band} and
\texttt{sigma}; the former specifies the half-bandwidth of the Toeplitz
blocks and the latter the variance of the Gaussian point spread function.
The blur and noise contaminated image $b\in \mathbb{R}^{N}$ is obtained by
adding a noise-vector $e\in \mathbb{R}^{N}$, so that $b=A\overline{x}+e$. We
assume the blurring operator $A$ and the corrupted image $b$ to be available
while no information is given on the error $e$, we would like to determine a
restoration which accurately approximates the blur-free and noise-free image
$\overline{x}$.

We consider the restoration of a corrupted version of the $256\times 256$
test image \texttt{mri.png}. Contamination is by 1\% white Gaussian
noise and space-invariant Gaussian blur. The latter is generated as
described above with blur parameters \texttt{band}=7, \texttt{sigma}=2, so
that the condition number of $A$ is around $10^{13}$. Figure \ref{F6}
displays the performance of the AT-GCV. On the left the blur-free and
noise-free image, on the middle the corrupted image, on the right the
restored image. From top to bottom the image in original size and two
different zooms. The regularization operator is defined as (cf. \cite{GN})
\begin{equation*}
L=I_{n}\otimes L_{1}+L_{1}\otimes I_{n}\in \mathbb{R}^{N\times N},
\end{equation*}%
where $L_{1}\in \mathbb{R}^{n\times n}$ is the discretized first derivative
with a zero row at the bottom (cf. also \cite[\S 5]{KHE}).
\begin{figure}[H]
\centering
\includegraphics[width=0.32\textwidth]{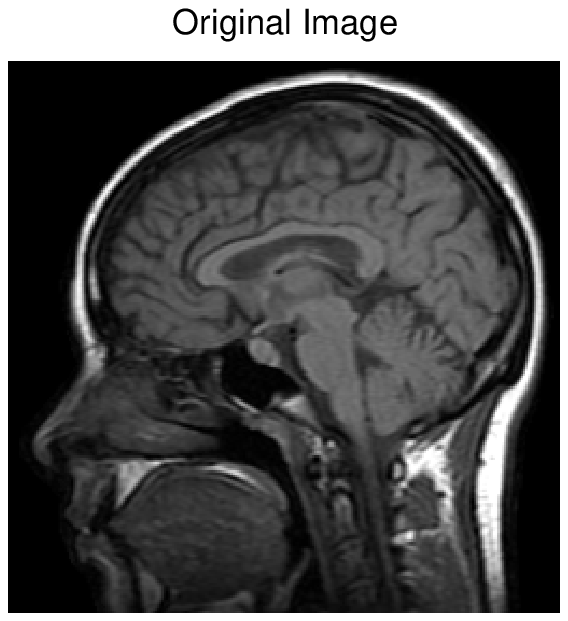} %
\includegraphics[width=0.32\textwidth]{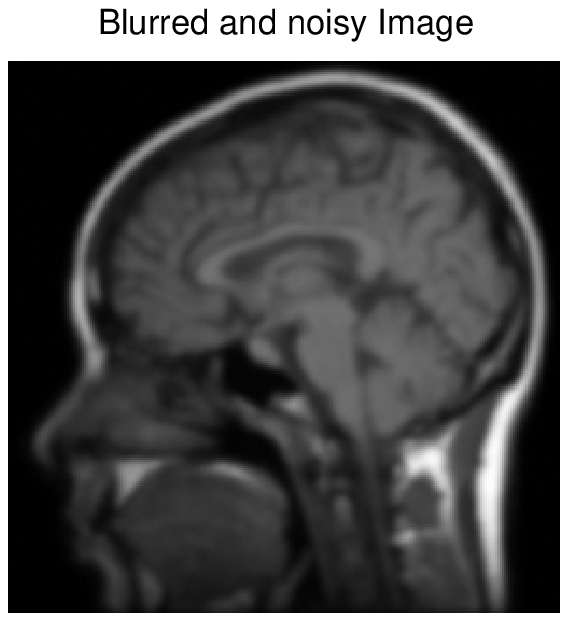} %
\includegraphics[width=0.32\textwidth]{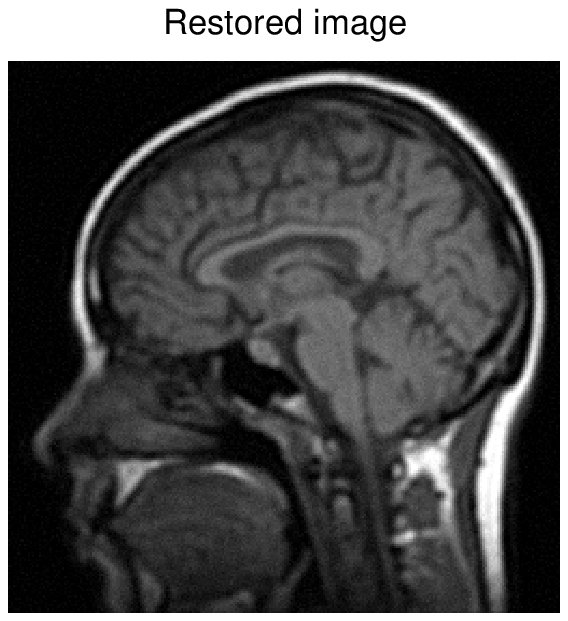} \newline
\includegraphics[width=0.32\textwidth]{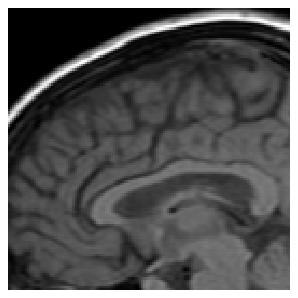} %
\includegraphics[width=0.32\textwidth]{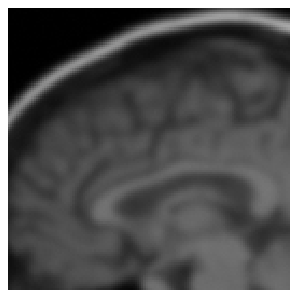} %
\includegraphics[width=0.32\textwidth]{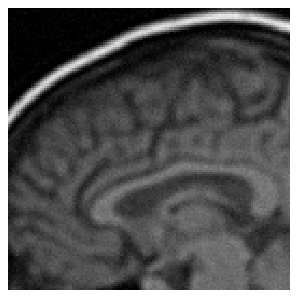} \newline
\includegraphics[width=0.32\textwidth]{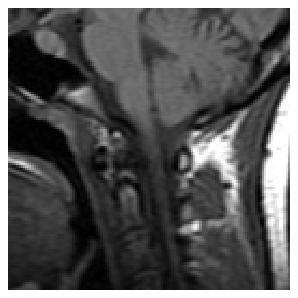} %
\includegraphics[width=0.32\textwidth]{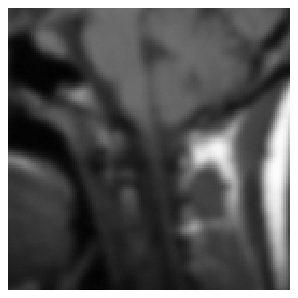} %
\includegraphics[width=0.32\textwidth]{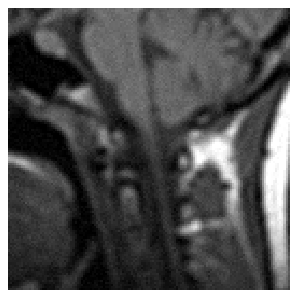}
\caption{\textit{Restoration of \texttt{mri.png} image. Original image,
blurred and noisy image with noise level $\protect\varepsilon =10^{-2}$ and
blur parameters \texttt{band}=7, \texttt{sigma}=2, restored image. From top
to bottom original size and two zoom.}}
\label{F6}
\end{figure}
The experiment has been carried out using Matlab 7.10 on a single processor
computer (Intel Core i7). The result has been obtained in 5 iterations of
the Arnoldi algorithm, in around $0.5$ seconds. Many other experiments on
image restoration have shown similar performances.

\section{Conclusion}

The fast convergence of the Arnoldi algorithm when applied to compact
operators makes the AT method particularly attractive for the regularization
of discrete ill-posed problems. The projected problems rapidly inherit the
basic features of the original one, allowing a substantial computational
advantage with respect to other approaches.

In this paper, in absence of information on the noise which affects the
right-hand side of the system, we have employed the GCV criterion. Contrary
to the hybrid techniques, the sequence of regularization parameters $\left\{
\lambda _{m}\right\} _{m\geq 1}$ is defined in order to regularize the
original problem instead of the projected one, leading to GCV approximations
which are similar to the ones used for pure iterative methods (\cite[\S 7.4]{PCH}).
Notwithstanding the intrinsic difficulties concerning the GCV
criterion, the arising algorithm has shown to be quite robust. Of course
there are cases in which the method fails, but the numerical experiments
presented are rather representative of what happens in general.

\end{document}